\numberwithin{equation}{section}
\newtheorem{thm}{Theorem}[section]
\newtheorem{lem}[thm]{Lemma}
\newtheorem{prop}[thm]{Proposition}
\theoremstyle{definition}
\newtheorem{defi}[thm]{Definition}
\newtheorem{rem}[thm]{Remark}
\newcommand\R{{\mathbb R}}
\newcommand\C{{\mathbb C}}
\newcommand\N{{\mathbb N}}
\newcommand\Supp{{\mathrm{supp}}\, }
\newcommand\MScN[1]{\href{http://www.ams.org/mathscinet-getitem?mr=#1}{\nolinkurl{(#1)}}}
\newcommand\DOI[1]{\href{http://dx.doi.org/#1}{(doi: \nolinkurl{#1})}}
\newcommand\LINK[1]{\href{#1}{(link: \nolinkurl{#1})}}
\title[Some Stability Results for the (CGL)]{Some Stability Results for the Complex Ginzburg-Landau Equation}
\author{Sim\~ao Correia and M\'ario Figueira}
\begin{document}

\maketitle

\begin{abstract}
Using some classical methods of dynamical systems,
stability results and asymptotic decay of strong solutions for the complex Ginzburg-Landau equation (CGL),
$$ \partial_t u = (a + i\alpha) \Delta u - (b + i \beta) |u|^\sigma u + k u, \,\, t > 0,\,\, x\in \Omega, $$
with $a>0, \alpha, b, \beta, k \in \R$, are obtained.
Moreover, we show the existence of bound-states under certain conditions on the parameters and on the domain. We conclude with the proof of asymptotic stability of these bound-states when $\Omega=\mathbb{R}$ and $-k$ large enough.

\vskip10pt
\noindent\textbf{Keywords}: complex Ginzburg-Landau; stability; bound-states.
\vskip10pt
\noindent\textbf{AMS Subject Classification 2010}: 35Q56, 35B10, 35B35.
\end{abstract}

\section{Introduction and main results}\label{Intro}

This paper is concerned with the study of the stability of some equilibrium solutions of the Cauchy problem
for the complex Ginzburg-Landau equation
\begin{equation}\tag{CGL}\label{CGL}
\left\{\begin{array}{lll}
\partial_t u = (a + i\alpha) \Delta u - (b + i \beta) |u|^\sigma u + k u, & t > 0,\,\, x\in \Omega \\
u(t, x) = 0,\,\,\, x\in \partial \Omega,\,\, t\geq 0 \\
u(0, x) = u_0(x) &
\end{array}\right.
\end{equation}
where $ a >0, \alpha, b, \beta, k \in \R$ and $\Omega$ is assumed to be a domain in $\R^N$
 of class $C^2$ with $\partial \Omega$
bounded. We also analyze the asymptotic behaviour of the global solutions of \eqref{CGL}. Local and 
global existence and uniqueness of solutions of \eqref{CGL} are widely studied under several assumptions
on the parameters since the seminal paper \cite{Lever}; see also \cite{GV1},\cite{GV2},\cite{Okaza} and 
the references therein. On the other hand, there are not many results concerning the blow-up of the solutions
of \eqref{CGL}: we refer e.g. \cite{Caz-Weiss1} and \cite{Masmou}. Furthermore, concerning the existence of standing
wave solutions, some partial results were obtained in the case of a bounded connected
 domain; cf.\cite{Caz-Weiss2} and \cite{Puel}. The latter also includes a result concerning the whole space $\Omega = \R^N$.

The linear operator of \eqref{CGL}, $ -A = (a + ib)\Delta,\,\,\, a>0,\, b\in\R$, with domain,
$D(A) = H^2(\Omega) \cap H^1_0(\Omega)$, generates an analytic semi-group (see \cite{Pazy}):
writing $A_\varepsilon = A + \varepsilon I, \, \varepsilon > 0$, we have $\Re (A_\varepsilon u, u)_{L^2}
= a \| \nabla u \|_{L^2}^2 + \varepsilon \| u \|_{L^2}^2 \geq c_0 \| u \|_{H^1}^2,\, c_0 = \min \{ a, \varepsilon \} $
and $| \Im  ( A_\varepsilon u , u )_{L^2} | \leq |b| \| u \|_{H^1}^2$. Then, the resolvent set
$$ \rho (A_\varepsilon ) \supset \{ \lambda : | {\rm arg} \lambda | > \theta_\varepsilon \},\,\,
\theta_\varepsilon = \arctan (| b | /\varepsilon) < \pi/2$$
and if we choose $\varepsilon$ small enough, we conclude that $\rho (A) \supset \{ \lambda : \Re \lambda < 0 \}.$ We see also
easily that $ e^{-A z} = e^{-A_a z} e^{a z}$ is an analytic semi-group in the sector
$ \{ z : | {\rm arg} z | < \omega \},\, \omega = \pi/2 -  \arctan (|b| / a) $.

 Let introduce now the following definition:

\begin{defi}
A function $u(\cdot) \in C( [0, T); L^2(\Omega)), \,\, T>0$, is called a strong solution of \eqref{CGL} if
$u(t)\in D(A)$, ${{du}\over{dt}} (t)$ exists for $t\in(0, T),\,\, u(0) = u_0$ and the differential
equation in \eqref{CGL} is satisfied for $t\in (0, T)$.
\end{defi}
Since $f(u) = -(b + i\beta)|u|^\sigma u + k u$ is locally Lipschitz in $H^1(\Omega)$ with
values in $L^2(\Omega)$, for $\sigma \leq {2\over{N-2}}$ if $N  > 2$ and for any $\sigma > 0$
if $N=1, 2$, then there exists $T=T(u_0) > 0$ such that the problem \eqref{CGL}
has a unique solution on $[0, T_0)$, and this solution depends continuously of the
initial data (see \cite{Henry}, pag. 54 and 62). To obtain a global solution, multiply
the equation in \eqref{CGL} by $\overline {u}, \, -\Delta \overline {u}$ and
$|u|^\sigma \overline{u}$, integrate on $\Omega$ and take the real part. One obtains

 \begin{equation} \label{conserv1}
      \frac{1} {2} \frac{d} {dt} \| u \|_{L^2}^2 = - a \| \nabla u \|_{L^2}^2
 - b \| u \|_{L^{\sigma+2}}^{\sigma + 2} + k \| u \|_{L^2} 
 \end{equation}
 
 \begin{equation} \label{conserv2}
 \begin{split}
   \frac{1} {2} \frac{d} {dt} \| \nabla u \|_{L^2}^2  = -a \| \Delta u \|_{L^2}^2 
   &+ b\, \Re \int_\Omega \Delta \overline{u} | u |^\sigma u \,dx \\
   & - \beta\, \Im \int_\Omega \Delta \overline{u} | u |^\sigma u \,dx + k \| \nabla u \|_{L^2}^2
   \end{split}
   \end{equation}
   
    \begin{equation} \label{conserv3}
 \begin{split}
\frac{1} {\sigma +2} \frac{d} {dt} \| u \|_{L^{\sigma+2}}^{\sigma + 2} = 
a \,\Re  \int_\Omega \Delta u | u |^\sigma \overline{u} \,dx & - \alpha \, \Im \int_\Omega \Delta u | u |^\sigma \overline{u} \,dx  \\
& -b \| u \|_{L^{2 \sigma+2}}^{2 \sigma + 2} + k  \| u \|_{L^{\sigma+2}}^{\sigma + 2}
\end{split}
\end{equation}
Next, if we multiply \eqref{conserv3} by $\beta / \alpha$ (with $\alpha \not = 0$) and add to
\eqref{conserv1} $+$ \eqref{conserv2}, one obtains

\begin{multline*}
\frac{d}{dt} \left[ \frac {1} {2} \| u \|_{H^1}^2  + \frac {\beta} {\alpha (\sigma + 2)} \| u \|_{L^{\sigma+2}}^{\sigma + 2} \right]
= k \,[  \| u \|_{H^1}^2 + \frac {\beta} {\alpha} \| u \|_{L^{\sigma+2}}^{\sigma + 2} ]   
 -a \| \nabla u \|_{L^2}^2 \\-b \| u \|_{L^{\sigma+2}}^{\sigma + 2} 
  -a \| \Delta u \|_{L^2}^2 
- \frac {\beta b} {\alpha} \| u \|_{L^{2 \sigma+2}}^{2 \sigma + 2} 
 + \left( b + \frac{a \beta} {\alpha}\right) \Re  \int_\Omega \Delta u | u |^\sigma \overline{u} \,dx 
\end{multline*}
Since $\Re  \int_\Omega \Delta u | u |^\sigma \overline{u} \,dx \leq 0$, it is now clear that, 
if $b\geq 0$ and $\alpha \beta \geq 0$
then $\| u(t) \|_{H^1}$ is locally bounded and we can state:

\begin{prop}\label{propgeral}
Let $\Omega$ be a domain in $\R^N$ of class $C^2$ with $\partial \Omega$ bounded. Assume 
$0 < \sigma \leq \frac {2} {N-2}$ if $ N > 2$ and $ 0 < \sigma$ if $N=1, 2$. Then, for any $u_0 \in H_0^1 (\Omega )$, 
there exists $T = T(u_0) > 0$ such that \eqref{CGL} has a unique strong solution on $[0, T)$ and this
solution depends continuously of the initial data. Moreover, if $b +\alpha\beta/a\geq 0$, the solution is global.
\end{prop}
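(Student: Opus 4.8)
The plan is to get local existence, uniqueness and continuous dependence from the classical theory of semilinear parabolic equations, and then to prove globality by extracting an a priori bound on $\|u(t)\|_{H^1}$ from the identities \eqref{conserv1}--\eqref{conserv3}. For the local part, write \eqref{CGL} as $u'+Au=f(u)$, $u(0)=u_0$, with $-A=(a+i\alpha)\Delta$, $D(A)=H^2(\Omega)\cap H^1_0(\Omega)$. As recalled above, $-A$ (or $-A_\varepsilon=-A-\varepsilon I$ for small $\varepsilon>0$) generates an analytic semigroup, so $A_\varepsilon$ is sectorial, its fractional powers are defined and $D(A_\varepsilon^{1/2})=H^1_0(\Omega)$ with equivalent norm. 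From the pointwise bound $||u|^\sigma u-|v|^\sigma v|\le C(|u|^\sigma+|v|^\sigma)|u-v|$, Hölder's inequality and the Sobolev embedding $H^1_0(\Omega)\hookrightarrow L^{2(\sigma+1)}(\Omega)$ — valid precisely when $\sigma\le\frac2{N-2}$ for $N>2$ and for every $\sigma>0$ when $N=1,2$ — one checks that $f(u)=-(b+i\beta)|u|^\sigma u+ku$ is locally Lipschitz from $H^1_0(\Omega)$ into $L^2(\Omega)$. The theory of \cite{Henry} then produces a unique maximal solution $u\in C([0,\Tma);H^1_0(\Omega))\cap C^1((0,\Tma);L^2(\Omega))$ with $u(t)\in D(A)$ for $t\in(0,\Tma)$ — that is, a strong solution in the sense of the definition above — depending continuously on $u_0$, together with the blow-up alternative: either $\Tma=+\infty$, or $\limsup_{t\to\Tma^-}\|u(t)\|_{H^1}=+\infty$. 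Hence, for globality it suffices to bound $\|u(t)\|_{H^1}$ on every bounded subinterval of $[0,\Tma)$.

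To obtain that bound, fix $t\in(0,\Tma)$ and test the equation against $\overline u$, $-\Delta\overline u$ and $|u|^\sigma\overline u$, taking real parts, which yields \eqref{conserv1}--\eqref{conserv3}; the one subtlety is that these identities require slightly more regularity than $u(t)\in D(A)$, and are justified in the usual way by carrying out the computation for solutions emanating from smooth data and passing to the limit via the continuous dependence. The structural ingredient is the identity, obtained by integration by parts (the boundary term vanishing since $u=0$ on $\partial\Omega$) together with $|u|\nabla|u|=\re(\overline u\nabla u)$,
$$\re\int_\Omega\Delta u\,|u|^\sigma\overline u\,dx=-\int_\Omega|u|^\sigma|\nabla u|^2\,dx-\sigma\int_\Omega|u|^{\sigma-2}\bigl(\re(\overline u\,\nabla u)\bigr)^2\,dx\le0 .$$
Forming \eqref{conserv1}$+$\eqref{conserv2}$+\tfrac\beta\alpha$\eqref{conserv3} (first assuming $\alpha\ne0$) gives exactly the identity displayed just before Proposition~\ref{propgeral} for
$$\Phi(t):=\tfrac12\|u(t)\|_{H^1}^2+\tfrac{\beta}{\alpha(\sigma+2)}\|u(t)\|_{L^{\sigma+2}}^{\sigma+2},$$
in which the imaginary cross terms cancel and $\re\int_\Omega\Delta u|u|^\sigma\overline u\,dx$ enters with the coefficient that the hypothesis of Proposition~\ref{propgeral} renders nonnegative, so that term is $\le0$; the dissipative terms $-a\|\nabla u\|_{L^2}^2$ and $-a\|\Delta u\|_{L^2}^2$ are $\le0$; and the surviving nonlinear terms $-b\|u\|_{L^{\sigma+2}}^{\sigma+2}$ and $-\tfrac{\beta b}\alpha\|u\|_{L^{2\sigma+2}}^{2\sigma+2}$ are $\le0$ when $b\ge0$ and $\alpha\beta\ge0$, and in general are absorbed into the dissipative terms by Young's inequality after a Gagliardo--Nirenberg interpolation, the subcriticality $\sigma\le\frac2{N-2}$ being exactly what permits this. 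Since $\alpha\beta\ge0$ forces $\beta/\alpha\ge0$, one has $\Phi\ge\tfrac12\|u\|_{H^1}^2\ge0$ and $k[\|u\|_{H^1}^2+\tfrac\beta\alpha\|u\|_{L^{\sigma+2}}^{\sigma+2}]\le(\sigma+4)|k|\,\Phi$, whence $\Phi'(t)\le C_1\Phi(t)+C_2$ on $(0,\Tma)$ with $C_1,C_2\ge0$ independent of $t$.

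Gronwall's lemma then bounds $\Phi$, hence $\|u(t)\|_{H^1}\le\sqrt{2\Phi(t)}$, on every bounded subinterval of $[0,\Tma)$, and the blow-up alternative forces $\Tma=+\infty$. When $\alpha=0$ (the hypothesis now reading $b\ge0$) one argues directly from \eqref{conserv1}$+$\eqref{conserv2}: the terms $b\,\re\int_\Omega\Delta u|u|^\sigma\overline u\,dx$ and $-b\|u\|_{L^{\sigma+2}}^{\sigma+2}$ are $\le0$, while $\beta\,\im\int_\Omega\Delta u|u|^\sigma\overline u\,dx\le|\beta|\,\|\Delta u\|_{L^2}\|u\|_{L^{2\sigma+2}}^{\sigma+1}$ is disposed of by the same Young/Gagliardo--Nirenberg estimate against $a\|\Delta u\|_{L^2}^2$. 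The main obstacle is precisely this last point — controlling the wrong-sign nonlinear contributions — which is immediate under $b\ge0$ and $\alpha\beta\ge0$ but in the general case rests on the subcriticality of $\sigma$ and on the two dissipative terms $-a\|\nabla u\|_{L^2}^2$, $-a\|\Delta u\|_{L^2}^2$ being available to absorb them.
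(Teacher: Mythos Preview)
Your local-existence argument and the core of the global argument (the $\beta/\alpha$-weighted combination of \eqref{conserv1}--\eqref{conserv3}, the sign identity for $\Re\int_\Omega\Delta u\,|u|^\sigma\overline u$, and Gronwall on the resulting functional) match the paper exactly. Two points deserve care, though.

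First, the coefficient multiplying $\Re\int_\Omega\Delta u\,|u|^\sigma\overline u$ in the combined identity is $b+a\beta/\alpha$, not $b+\alpha\beta/a$; the hypothesis as stated in the proposition does \emph{not} by itself force that coefficient to be nonnegative. The paper's own text in fact only carries the argument through under the stronger pair of conditions $b\ge0$ and $\alpha\beta\ge0$ (which implies both the displayed coefficient and the stated hypothesis are nonnegative), and you implicitly do the same --- witness your sentence ``Since $\alpha\beta\ge0$ forces $\beta/\alpha\ge0$\dots''. So the clause ``the hypothesis of Proposition~\ref{propgeral} renders nonnegative'' conflates two different expressions, and the subsequent ``in general'' discussion is inconsistent with what you actually use.

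Second, the assertion that the leftover nonlinear terms can ``in general'' be absorbed via Gagliardo--Nirenberg and Young is not justified and does not hold across the full range of $\sigma$. The term $\|u\|_{L^{2\sigma+2}}^{2\sigma+2}$ (which appears whenever $\beta b/\alpha<0$, and also in your $\alpha=0$ argument after applying Young to $|\beta|\,\|\Delta u\|_{L^2}\|u\|_{L^{2\sigma+2}}^{\sigma+1}$) interpolates as $\|u\|_{L^{2\sigma+2}}^{2\sigma+2}\lesssim\|\Delta u\|_{L^2}^{N\sigma/2}\|u\|_{L^2}^{2\sigma+2-N\sigma/2}$; absorbing this into $a\|\Delta u\|_{L^2}^2$ requires $N\sigma/2<2$, i.e.\ $\sigma<4/N$, which is strictly stronger than $\sigma\le 2/(N-2)$ when $N=3$. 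Hence neither the ``general'' $\alpha\neq0$ case nor your $\alpha=0$ treatment closes for $4/3<\sigma\le2$ in three dimensions.

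In short: your proof is correct and coincides with the paper's under $b\ge0$ and $\alpha\beta\ge0$ --- which is precisely what the text preceding the proposition establishes --- but the extension to the bare hypothesis $b+\alpha\beta/a\ge0$ is not proved, and the interpolation sketch you offer for it does not work in the full subcritical range.
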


Under the conditions of the Proposition 1.2, denote by $S(t)$ the dynamical system associated to \eqref{CGL} such that
 $ S(t) u_0 \equiv u(t ; u_0),\, t\geq 0$, represents the solution of \eqref{CGL} with
initial data $u(0) = u_0 \in H_0^1 (\Omega)$, and let recall the following classical definition:
\begin{defi}
We say that $u_0 \in H_0^1(\Omega)$ is stable if for any $\delta > 0$ there exists $\varepsilon > 0$ such that
$$ v\in H_0^1 (\Omega) , \,\, \| u_0 - v \|_{H^1} < \,\varepsilon \Rightarrow \,
\sup_{t \geq 0} \| S(t) u_0 - S(t) v \|_{H^1} < \delta $$
In addition, we say that $u_0$ is asymptotically stable if $u_0$ is stable and there exists $\eta > 0$
such that $\lim_{t\rightarrow\infty} \| S(t) u_0 - S(t) y \|_{H^1} = 0$ for all $y\in H_0^1(\Omega),\,
\| u_0 - y \|_{H^1} < \eta. $
\end{defi}

The following results concern the stability of the equilibrium solution $u \equiv 0$ and the asymptotic
decay of the global solutions of \eqref{CGL} depending on the coefficient for the driving term $k$.

\begin{thm}[Stability of the zero solution for small $k$]\label{teor1}
Assume the hypothesis of the Proposition \ref{propgeral}.

\begin{enumerate}[1.]
\medskip

\item $ L^p$  stability:
\medskip
 
 \noindent If 
 $$ k \leq 0 \quad {\it and} \quad \frac{|\alpha|} {a} \leq \frac{2} {p - 2}$$
 the equilibrium point $0$ is $L^p$-stable for $2 \leq p \leq \frac {2 N} {N - 2}$, if $N > 2,\,\, 2 \leq p < \infty$ if $N=1, 2$.
 
 \noindent In addition, if $k < 0$
 $$\| u(t, x) \|_{L^p} \rightarrow 0\mbox{ as }t\rightarrow \infty, \quad \hbox{\it for all }  u_0\in H_0^1(\Omega).$$ 
 
 \noindent In the particular case $p=2$, if $\Omega$ is a bounded domain, $k > 0$ and
 $\frac {k} {a} < \left(\frac{1} {\omega_N} | \Omega | \right)^{- 2/N}$, where $\omega_N$ represents
 the volume of the unit ball in $\R^N$ and $|\Omega |$ the volume of $\Omega$, then 
 $ \| u(t, x) \|_{L^2} \rightarrow 0$ as $t\rightarrow \infty $, for all $ u_0\in H_0^1(\Omega).$
 \medskip
 
 \item $H^1$ stability:
 \medskip
 
 \noindent Let $\Omega$ a bounded domain and assume $\alpha /a = \beta / b $. Then $0$ is
 asymptotically stable in $H^1$ if
 $$
  k \leq  \frac {a} {2}  \left(\frac{1} {\omega_N} | \Omega | \right)^{- 2/N}
 $$
% \begin{enumerate}[a)]
% \item $ k \leq 0 $ or
% \item $ 0 < k \leq  \frac {a} {2}  \left(\frac{1} {\omega_N} | \Omega | \right)^{- 2/N}$
% \end{enumerate}
 \noindent In addition,
 $$ \| u(t, x) \|_{H^1} \rightarrow 0\,\mbox{ as } t\rightarrow \infty, \quad \hbox{\it for all} \ u_0\in H_0^1(\Omega).$$
 
 \end{enumerate}
 
 \end{thm}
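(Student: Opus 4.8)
The plan is to derive, from the energy identities \eqref{conserv1}--\eqref{conserv3}, a closed differential inequality for a suitable Lyapunov-type functional and then apply Gronwall-type arguments together with the Poincaré/Sobolev inequality on the bounded domain $\Omega$. For part 1 with $p=2$, I would start directly from \eqref{conserv1}: since $b+\alpha\beta/a\ge 0$ is not even needed here (the solution is global under our standing hypothesis), I would drop the $-b\|u\|_{L^{\sigma+2}}^{\sigma+2}$ term when $b\ge 0$, or more carefully keep track of it, and use $\|\nabla u\|_{L^2}^2\ge \lambda_1\|u\|_{L^2}^2$ where $\lambda_1$ is the first Dirichlet eigenvalue of $-\Delta$ on $\Omega$; the Faber--Krahn inequality gives $\lambda_1\ge \omega_N^{2/N}|\Omega|^{-2/N}$, which is exactly the source of the stated threshold. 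This yields $\frac{d}{dt}\|u\|_{L^2}^2\le 2(k-a\lambda_1)\|u\|_{L^2}^2$, hence exponential decay when $k/a<(\omega_N^{-1}|\Omega|)^{-2/N}$. For the $L^p$ statement with general $p$, the condition $|\alpha|/a\le 2/(p-2)$ is precisely what makes the quadratic form controlling $\frac{d}{dt}\|u\|_{L^p}^p$ dissipative: one multiplies the equation by $|u|^{p-2}\bar u$, integrates, takes the real part, and the cross term involving $\alpha$ and $\nabla u$ against $\nabla(|u|^{p-2}\bar u)$ must be absorbed by the good term $-a\,\Re\int \nabla u\cdot\nabla(|u|^{p-2}\bar u)$; the algebra of $|\nabla |u|^{p/2}|^2$ versus $|u|^{p-2}|\nabla u|^2$ produces the sharp constant $2/(p-2)$. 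When $k\le 0$ this gives boundedness hence $L^p$-stability, and when $k<0$ one gets $\|u(t)\|_{L^p}\to 0$; for $k<0$ and general (possibly unbounded) $\Omega$ the decay is immediate from the negative linear term, no Poincaré needed.

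For part 2, the $H^1$ statement, the hypothesis $\alpha/a=\beta/b$ is the key structural assumption: it makes the nonlinearity ``parallel'' to the linear operator, so that the combination $b+a\beta/\alpha = b + a\cdot(b/a)\cdot(a/\alpha)\cdot(\alpha/a)\dots$ — more cleanly, with $\alpha/a=\beta/b=:\mu$, one has $(a+i\alpha)=a(1+i\mu)$ and $(b+i\beta)=b(1+i\mu)$, so dividing the equation by $(1+i\mu)$ turns it into the real Ginzburg--Landau (i.e. nonlinear heat) structure up to the factor $k/(1+i\mu)$. Concretely I expect that the functional to use is $E(u)=\frac12\|\nabla u\|_{L^2}^2 + \frac{\beta}{\alpha(\sigma+2)}\|u\|_{L^{\sigma+2}}^{\sigma+2}$, exactly the one appearing in the displayed \texttt{multline*} before Proposition \ref{propgeral}. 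Under $\alpha/a=\beta/b$ the coefficient $b+a\beta/\alpha$ in front of the troublesome term $\Re\int_\Omega\Delta u|u|^\sigma\bar u\,dx$ does \emph{not} vanish in general (it equals $2b$), so instead I would exploit that $\Re\int_\Omega\Delta u|u|^\sigma\bar u\,dx\le -c\|\,|u|^{\sigma/2}\nabla u\|^2\le 0$ and that $b\ge 0$ (forced, essentially, by $b+\alpha\beta/a\ge0$ together with $\alpha\beta=\alpha^2 b/a\ge 0$), so all the nonlinear contributions have a sign. Then the $H^1$ identity reduces to $\frac{d}{dt}E(u)\le k\big(\|u\|_{H^1}^2+\tfrac{\beta}{\alpha}\|u\|_{L^{\sigma+2}}^{\sigma+2}\big)-a\|\nabla u\|_{L^2}^2-a\|\Delta u\|_{L^2}^2+(\text{nonpositive terms})$. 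Using $\|u\|_{L^2}^2\le \lambda_1^{-1}\|\nabla u\|_{L^2}^2$ and, for the $H^1$ norm, $\|u\|_{H^1}^2=\|u\|_{L^2}^2+\|\nabla u\|_{L^2}^2\le(1+\lambda_1^{-1})\|\nabla u\|_{L^2}^2$, one converts the right-hand side into $\le (2k-a\lambda_1\cdot(\text{something}))E(u)+\dots$; the stated threshold $k\le \frac a2(\omega_N^{-1}|\Omega|)^{-2/N}\le \frac a2\lambda_1$ is then what guarantees the coefficient is $\le 0$, giving boundedness (stability) and, with strict inequality or the extra dissipation, convergence $E(u)\to 0$, hence $\|u(t)\|_{H^1}\to0$.

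The main obstacle, as I see it, is twofold. First, getting the \emph{sharp} constants: matching $|\alpha|/a\le 2/(p-2)$ requires the precise pointwise identity relating $\nabla(|u|^{p-2}\bar u)$ to $|u|^{p-2}\nabla u$ and $|u|^{p-4}\bar u^2\,\overline{\nabla u}$ and then a Cauchy--Schwarz estimate that is tight exactly at the claimed constant; similarly the Faber--Krahn constant $\omega_N^{2/N}|\Omega|^{-2/N}$ must be invoked rather than a generic Poincaré constant, and one must check the inequality direction is the useful one. Second, for the $H^1$ part, I need to verify carefully that asymptotic \emph{stability} (not merely attractivity of $0$) follows — i.e. the continuous dependence from Proposition \ref{propgeral} must be upgraded to a \emph{uniform in time} estimate $\sup_{t\ge0}\|S(t)u_0-S(t)v\|_{H^1}<\delta$, which I would get by noting that $0$ is an equilibrium, $S(t)0=0$, so stability of $0$ is exactly $\sup_t\|S(t)v\|_{H^1}$ small for $\|v\|_{H^1}$ small, and that is precisely what the differential inequality for $E$ delivers since $E$ is equivalent to $\|\cdot\|_{H^1}^2$ on bounded $H^1$-sets (using $\|u\|_{L^{\sigma+2}}^{\sigma+2}\le C\|u\|_{H^1}^{\sigma+2}$ by Sobolev). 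The convergence $\|u(t)\|_{H^1}\to0$ for \emph{all} $u_0$ (a global, not just local, statement) then needs a separate argument: first use part 1 to get $\|u(t)\|_{L^2}\to0$, then feed this into the $H^1$ inequality, or alternatively integrate the dissipation in time to conclude.
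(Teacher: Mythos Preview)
Your treatment of Part~1 is essentially the paper's: multiply by $|u|^{p-2}\bar u$, integrate, and use the pointwise identity $\nabla|u|^{p-2}=\tfrac{p-2}{2}|u|^{p-4}(u\nabla\bar u+\bar u\nabla u)$ to see that the $\alpha$ cross term is bounded by $|\alpha|\tfrac{p-2}{2}\int|u|^{p-2}|\nabla u|^2$, which is absorbed exactly when $|\alpha|/a\le 2/(p-2)$; the $p=2$ bounded-domain refinement is Poincar\'e plus Faber--Krahn, as you say.

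For Part~2, however, there is a real gap. You correctly note the structural fact that $\alpha/a=\beta/b=:\mu$ gives $(a+i\alpha)\Delta u-(b+i\beta)|u|^\sigma u=(1+i\mu)\bigl(a\Delta u-b|u|^\sigma u\bigr)$, but you then drop this and work instead with the functional $E$ from the displayed identity before Proposition~\ref{propgeral}, bounding terms one by one. That route does not naturally produce the stated threshold $k\le\tfrac a2(\omega_N^{-1}|\Omega|)^{-2/N}$; in fact the sign-dropping you propose yields a condition of the shape $k\le a\min(1,\lambda_1)$ rather than $k\le \tfrac a2\lambda_1$. The paper instead uses the Lyapunov functional
\[
V(u)=\frac a2\|\nabla u\|_{L^2}^2+\frac b{\sigma+2}\|u\|_{L^{\sigma+2}}^{\sigma+2}-\frac k2\|u\|_{L^2}^2,
\]
whose Fr\'echet derivative is $-\Re\int\overline{\bigl(a\Delta u-b|u|^\sigma u+ku\bigr)}\,(\cdot)$, and computes directly that under $\alpha/a=\beta/b$ one has the \emph{exact identity}
\[
\dot V(u)=-\int_\Omega\bigl|a\Delta u-b|u|^\sigma u+ku\bigr|^2\,dx\le 0.
\]
This is the missing idea: the cross terms coming from the imaginary part $i(\alpha\Delta u-\beta|u|^\sigma u)=i\mu(a\Delta u-b|u|^\sigma u)$ vanish because $\Im\int\Delta\bar u\,u=0$ and $\Im\int|u|^{\sigma+2}=0$. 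From this perfect square one expands and uses $\|\Delta u\|_{L^2}^2\ge\lambda_1\|\nabla u\|_{L^2}^2$, $\Re\int\Delta u\,\bar u=-\|\nabla u\|_{L^2}^2$, and the Cauchy--Schwarz estimate $2bk\|u\|_{\sigma+2}^{\sigma+2}\le b^2\|u\|_{2\sigma+2}^{2\sigma+2}+k^2\|u\|_{L^2}^2$ to get $-\dot V(u)\ge(a^2\lambda_1-2ak)\|\nabla u\|_{L^2}^2$, which is exactly where the factor $\tfrac12$ in the threshold comes from. Your $(1+i\mu)$ observation is precisely the hint that such a functional exists (it is the energy of the associated real gradient flow), so you were one step away; but the functional $E$ and the multline identity will not close the argument at the claimed constant.
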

 
% We consider now the \eqref{CGL} problem with the parameter $b < 0$. Under certain conditions,
% the next theorem prove the instability of the solution $u \equiv 0$.

 \begin{thm}[Instability of the zero solution for large $k$]\label{instability}
 Let $\Omega \subset \R^N$ be a bounded domain of class $C^2$ and denote by $\lambda_n, n=1,2, \ldots$
 the eigenvalues of $ - \Delta$ ranked in ascending order.
  Consider the \eqref{CGL} problem with $0 < \sigma \leq \frac{2} {N-2}$ if $N > 2$ and $0 < \sigma$ if $N = 1, 2$
   and assume $b < 0,\, \alpha, \beta \in \R, \, \alpha / a = \beta / b $
 and $k > 0$ such that $k/a \in (\lambda_n, \lambda_{n+1} )$ for some $n\in \N$. Then, the equilibrium
 solution $u\equiv 0$ is unstable.
 
 \end{thm}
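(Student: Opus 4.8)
The plan is to show instability by exhibiting a one-dimensional invariant subspace on which the dynamics blows up (or at least escapes every small neighborhood of $0$), exploiting the spectral gap $k/a\in(\lambda_n,\lambda_{n+1})$ together with the algebraic structure coming from the hypothesis $\alpha/a=\beta/b$. First I would record the key consequence of that hypothesis: if we write the linear part as $(a+i\alpha)\Delta=a(1+i\alpha/a)\Delta$ and the nonlinear coefficient as $(b+i\beta)=b(1+i\beta/b)=b(1+i\alpha/a)$, then the whole equation factors as $\partial_t u=(1+i\tfrac\alpha a)\bigl(a\Delta u-b|u|^\sigma u\bigr)+ku$. Thus, setting $\mu:=1+i\alpha/a$ and absorbing a phase/time change, solutions of the form $u(t,x)=\rho(t)\varphi_1(x)$, where $\varphi_1>0$ is the first Dirichlet eigenfunction of $-\Delta$ with eigenvalue $\lambda_1$ (so $\lambda_1\le\lambda_n<k/a$), would be natural candidates — except $|\varphi_1|^\sigma\varphi_1$ is not proportional to $\varphi_1$. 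So instead I would not look for an exact invariant line but argue via linearization.

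The second step is the spectral analysis of the linearization at $0$. The linearized operator is $L=(a+i\alpha)\Delta+k$, acting on $H^2\cap H^1_0$. Its spectrum is $\{-(a+i\alpha)\lambda_m+k:m\ge1\}$, and $\re\bigl(-(a+i\alpha)\lambda_m+k\bigr)=k-a\lambda_m$, which is $>0$ precisely for those $m$ with $\lambda_m<k/a$, i.e. for $m\le n$ (here I use $k/a\in(\lambda_n,\lambda_{n+1})$ so no eigenvalue sits exactly on the imaginary axis). Hence $L$ has exactly the finite-dimensional unstable subspace $X_+=\mathrm{span}\{\varphi_1,\dots,\varphi_n\}$ (real span of eigenfunctions, regarded inside the complex $L^2$), nonempty since $n\ge1$, and a complementary stable part on which the semigroup decays like $e^{-(a\lambda_{n+1}-k)t}$. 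Since the nonlinearity $f(u)=-(b+i\beta)|u|^\sigma u$ is $C^1$ from $H^1_0$ to $L^2$ with $f(0)=0$, $Df(0)=0$ (as $\sigma>0$), and $S(t)$ is the associated semiflow from Proposition \ref{propgeral}, the standard instability theorem for equilibria of semilinear parabolic equations with a gap in the spectrum (the unstable-manifold / Lyapunov instability theorem, e.g. Henry, Theorem 5.1.3) applies: an equilibrium whose linearization has spectrum meeting $\{\re\lambda>0\}$ is unstable.

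Concretely, I would carry it out as follows. (i) Verify the semiflow $S(t)$ is well defined and $C^1$ near $0$ in $H^1_0$ and that $0$ is an equilibrium — immediate from Proposition \ref{propgeral} and $f(0)=0$. (ii) Compute $\sigma(L)$ as above and check the gap condition, using that $\partial\Omega$ is $C^2$ and bounded so $-\Delta$ has compact resolvent and a discrete real spectrum $\lambda_m\to\infty$. (iii) Decompose $L^2(\Omega)=X_+\oplus X_-$ into $L$-invariant closed subspaces with $\sup\re\sigma(L|_{X_-})<0<\inf\re\sigma(L|_{X_+})$, and note the nonlinearity is a higher-order perturbation. (iv) Invoke the instability theorem: there exist $\epsilon_0>0$ and initial data $v$ arbitrarily close to $0$ in $H^1$ whose orbit $S(t)v$ leaves the ball of radius $\epsilon_0$, contradicting stability in the sense of Definition 1.4. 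The hypothesis $\alpha/a=\beta/b$ and $b<0$ are not strictly needed for the linear instability argument per se — $Df(0)=0$ kills the nonlinearity at first order regardless — so I suspect the authors include $b<0$, $\alpha/a=\beta/b$ precisely to guarantee global existence (cf. the condition $b+\alpha\beta/a\ge0$ in Proposition \ref{propgeral}: here $b+\alpha\beta/a=b(1+(\alpha/a)^2)$, which is $\ge0$ iff $b\ge0$... so with $b<0$ solutions may actually blow up, and instability is then even clearer, the orbit leaving every neighborhood because it ceases to exist globally). I would address this point explicitly: either solutions starting near $0$ on $X_+$ blow up in finite time, or they exist globally and escape by the linear mechanism; in both cases $0$ is unstable.

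The main obstacle I anticipate is not the linear spectral picture — that is routine — but making the instability conclusion rigorous in the exact functional-analytic setting of the paper: one must confirm that $f:H^1_0\to L^2$ is genuinely $C^1$ with vanishing derivative at $0$ (the map $u\mapsto|u|^\sigma u$ is $C^1$ from $H^1$ to $L^2$ only under the stated growth restriction $\sigma\le 2/(N-2)$, and its Fréchet derivative at $0$ vanishes because $\|\,|u|^\sigma u\|_{L^2}=o(\|u\|_{H^1})$ as $u\to0$ — this needs the Sobolev embedding and the condition $\sigma>0$), and that the abstract instability theorem's hypotheses (sectorial generator, which we have since $-A$ generates an analytic semigroup as shown in the introduction, plus the spectral gap) are all met. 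A secondary subtlety is dealing cleanly with the $b<0$ case where global existence may fail: I would phrase the escape-from-neighborhood conclusion so that finite-time blow-up counts as instability, which is consistent with Definition 1.4 read as "fails to remain close for all $t$ for which the solution exists."
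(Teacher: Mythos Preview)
Your argument is correct, but it follows a genuinely different route from the paper's own proof. The paper does \emph{not} linearize: it uses the Lyapunov functional
\[
V(u)=\frac12\int_\Omega\Bigl(a|\nabla u|^2+\frac{2b}{\sigma+2}|u|^{\sigma+2}-k|u|^2\Bigr)\,dx,
\]
which, under the structural hypothesis $\alpha/a=\beta/b$, satisfies $\dot V(u)=-\int_\Omega|a\Delta u-b|u|^\sigma u+ku|^2\le0$ along orbits. The proof is then a Chetaev/LaSalle-type instability argument in three steps: (i) there is a small radius $\eta$ on which the only stationary point (i.e.\ solution of $\dot V=0$) is $u=0$; (ii) the set $K=\{V<0\}$ meets every neighbourhood of $0$, witnessed by a small multiple of the eigenfunction $\varphi_n$ since $k/a>\lambda_n$; (iii) an orbit starting in $K\cap U_\eta(0)$ that stayed in $U_\eta(0)$ would, by precompactness (compact resolvent, bounded domain) and the monotonicity of $V$, have $\omega$-limit set $\{0\}$, contradicting $V\le V(\phi)<0$ along the orbit.

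The trade-off is this: the paper's argument is self-contained and geometric, but it \emph{needs} the hypothesis $\alpha/a=\beta/b$ to make $V$ a genuine Lyapunov function, and it uses the compact embedding from the bounded domain. Your linearization route via the principle of linearized instability (Henry, Theorem~5.1.3) is shorter and, as you correctly observe, does not actually require $\alpha/a=\beta/b$ or $b<0$ at all; indeed it only needs $k>a\lambda_1$, not the full gap condition $k/a\in(\lambda_n,\lambda_{n+1})$. The cost is invoking a black-box theorem and checking its hypotheses carefully---in particular, for $0<\sigma<1$ the map $u\mapsto|u|^\sigma u$ is not Fr\'echet $C^1$ at $0$, so you should phrase the hypothesis as the weaker $\|f(u)-ku\|_{L^2}=o(\|u\|_{H^1})$ (which does hold via Sobolev, since $\||u|^\sigma u\|_{L^2}\le C\|u\|_{H^1}^{\sigma+1}$), rather than insisting on $C^1$; Henry's instability theorem only needs this little-$o$ estimate. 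Your handling of the possible finite-time blow-up is also fine and matches the implicit logic of the paper's Step~3.
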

 
 As we have noted before, the existence of standing waves for the complex Ginzbourg-Landau equation
 remains a largely open problem. Before we proceed, we rewrite the complex Ginzburg-Landau equation in its trigonometric form, following the notations of \cite{Caz-Weiss2} and \cite{Puel}:
 
  \begin{equation}\tag{CGL*}\label{GLcomplex}
 u_t = e^{i\theta} \Delta u + e^{i \gamma} | u |^\sigma u + ku
 \end{equation}
 
 \noindent where $-\pi/2 < \theta < \pi/2, -\pi<\gamma\le \pi$, $ \,k\in \R,\, u = u(t, x),\, x\in \R,\, t > 0.$
  \noindent We then look for solutions of \eqref{GLcomplex} in the form $ u = e^{i\omega t} \phi (x) $, where
 $\phi \in H^1(\R)$ is a solution of the elliptic equation
 
 \begin{equation}\label{GLelip1}\tag{B-S}
 i\omega \phi = e^{i \theta} \Delta \phi + e^{i \gamma} | \phi |^\sigma \phi + k \phi,\quad \phi\in H^1_0(\Omega).
 \end{equation}

 To stress the difficulty in finding bound-states, we start with the following simple non-existence result:
 
 \begin{prop}\label{prop:naoexistem}
 	Given $\Omega$ an open subset of $\mathbb{R}^d$, suppose that $\phi\in H^1_0(\Omega)$ is a solution of
 	$$
 	i\omega \phi = e^{i\theta}\Delta \phi + e^{i\gamma}|\phi|^\sigma \phi,\quad \omega>0, 0\le \gamma\le \theta \le \pi/2, \ \gamma\neq \pi/2.
 	$$
 	Then $\phi\equiv 0$. The same conclusion is valid if $\omega=0$ and $\theta\neq\gamma$.
 \end{prop}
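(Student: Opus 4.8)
The plan is to test the equation against $\bar\phi$ and read off sign information from its real and imaginary parts; notably, no Pohozaev-type dilation identity is needed here, the complex structure of the equation already does the work. Concretely, I would multiply the equation by $\bar\phi$, integrate over $\Omega$ (pairing in the $H^{-1}$--$H^1_0$ duality and integrating by parts on the Laplacian term), and write
$$A:=\|\nabla\phi\|_{L^2}^2,\qquad B:=\|\phi\|_{L^{\sigma+2}}^{\sigma+2},\qquad C:=\|\phi\|_{L^2}^2,$$
all of which are nonnegative. This produces the single complex identity $i\omega C=-e^{i\theta}A+e^{i\gamma}B$, and separating real and imaginary parts gives the two scalar relations
$$\cos\theta\,A=\cos\gamma\,B,\qquad \omega C=\sin\gamma\,B-\sin\theta\,A.$$

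Next I would run a short case analysis. If $\theta<\pi/2$ (hence also $\gamma<\pi/2$ and $\cos\theta,\cos\gamma>0$), the first relation gives $A=(\cos\gamma/\cos\theta)B$, and substituting into the second yields
$$\omega C=\Big(\sin\gamma-\frac{\sin\theta\cos\gamma}{\cos\theta}\Big)B=\frac{\sin(\gamma-\theta)}{\cos\theta}\,B.$$
Since $0\le\gamma\le\theta\le\pi/2$ we have $\gamma-\theta\in[-\pi/2,0]$, so $\sin(\gamma-\theta)\le 0$; thus the right-hand side is $\le 0$ while the left-hand side is $\ge 0$, and as $\omega>0$ this forces $C=0$, i.e. $\phi\equiv 0$. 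In the complementary case $\theta=\pi/2$ the first relation reads $\cos\gamma\,B=0$, and since $\gamma\neq\pi/2$ we have $\cos\gamma>0$, whence $B=0$ and again $\phi\equiv 0$. For the $\omega=0$ statement with $\theta\neq\gamma$: when $\theta<\pi/2$ the strict inequality $\sin(\gamma-\theta)<0$ in the displayed identity forces $B=0$ (and then $A=0$), and when $\theta=\pi/2$ the argument is unchanged; in every case $\phi\equiv 0$.

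I do not expect a genuine obstacle: the whole proof is one energy identity together with elementary trigonometry, and the hypotheses $\gamma\neq\pi/2$ (respectively $\theta\neq\gamma$ when $\omega=0$) are exactly what is needed to exclude the degenerate configurations. The only point deserving a line of care is the justification that $A,B,C$ are finite and that the integration by parts is legitimate; this follows once the equation is read in the weak $H^1_0(\Omega)$ sense (invoking, if desired, the Sobolev embedding in the admissible range of $\sigma$), after which the identity $i\omega C=-e^{i\theta}A+e^{i\gamma}B$ is rigorous and the remainder is immediate.
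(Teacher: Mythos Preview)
Your argument is correct and follows essentially the same route as the paper: both multiply by $\bar\phi$, integrate, separate real and imaginary parts, and combine them into the single relation $\omega\cos\theta\,\|\phi\|_{L^2}^2=\dfrac{\sin(\gamma-\theta)}{1}\,\|\phi\|_{L^{\sigma+2}}^{\sigma+2}$ (equivalently your displayed identity), from which the sign analysis forces $\phi\equiv 0$. Your write-up is in fact a bit more explicit than the paper's in handling the borderline cases $\theta=\pi/2$ and $\omega=0$.
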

\begin{rem}
	Notice that \eqref{GLelip1} may always be reduced to $k=0$: it suffices to multiply the equation by $(\omega+ik)/(\omega^2+k^2)$.
\end{rem}
% \begin{proof}
% 	Multiply the equation by $\bar{u}$ and integrate:
% 	$$
% 	\omega \|u\|_2^2 + \sin\theta \|\nabla u\|_2^2 = \sin\gamma \|u\|_{\sigma+2}^{\sigma+2},\quad \cos\gamma \|\nabla u\|_2^2 = \cos\gamma \|u\|_{\sigma+2}^{\sigma+2}.
% 	$$
% 	Then one has $\omega\cos\theta\|u\|_2^2=\sin(\gamma-\theta)\|u\|_{\sigma+2}^{\sigma+2}$. If $u\neq 0$, then either $\theta=\pi/2$ and
% 	$$
% 	\sin(\gamma-\theta)=0
% 	$$
% 	or $\theta<\pi/2$ and
% 	$$
% 	\sin(\gamma-\theta)>0.
% 	$$
% 	From the assumptions on $\gamma$ and $\theta$, both cases are impossible.
% \end{proof}
% 
% 
 As a consequence of the non-existence result, one can argue that a direct perturbative argument around the Schrödinger ground-state is impossible. In fact, the Schrödinger case corresponds to $\theta=\gamma=\pi/2$ and $\omega=1$, which lies in the frontier of the region of non-existence. If one could apply a direct perturbative argument (that is, without having a dependence between $\gamma$ and $\theta$), one would find an open region of existence around $\theta=\gamma=\pi/2$, which is impossible.
 
 Our contribution to the problem of existence of bound-states for \eqref{GLcomplex} is twofold: the first result concerns the existence on bounded domains, for certain values of $k$; the second focuses on the case $\Omega=\mathbb{R}$.
 
\begin{thm}\label{thm:existbslimitado}
	Suppose that $\Omega$ is a bounded, connected, open subset of $\mathbb{R}^d$ such that the Laplace-Dirichlet operator over $\Omega$ has a simple eigenvalue $\lambda$. Fix $0\le \theta, \gamma\le \pi/2$ and $\gamma\neq\pi/2$. There exists $\epsilon>0$ such that, for any $k$ with $0< \lambda\cos\theta - k<\epsilon$, there exist $\omega>0$ and a solution $\phi\in H^1_0(\Omega)$ of \eqref{GLelip1}
	
\end{thm}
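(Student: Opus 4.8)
My plan is to construct $\phi$ by bifurcation from the simple eigenvalue $\lambda$, realised as a Lyapunov--Schmidt reduction in which the gauge invariance $\phi\mapsto e^{i\rho}\phi$ of \eqref{GLelip1} is used to eliminate the extra dimension of the kernel. Write \eqref{GLelip1} as $G(\phi,\omega,k):=e^{i\theta}\Delta\phi+(k-i\omega)\phi+e^{i\gamma}|\phi|^\sigma\phi=0$, and let $\varphi$ be a real, $L^2$--normalised Dirichlet eigenfunction, $-\Delta\varphi=\lambda\varphi$. Split $H^1_0(\Omega)=\C\varphi\oplus X$, where $X$ is the $L^2$--orthogonal complement of $\C\varphi$, with projections $P$ and $Q=I-P$. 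A nonzero small solution cannot lie in $X$ (there the linearised operator is invertible, see below, so a contraction forces $\phi=0$), hence after a gauge rotation we may assume $\langle\phi,\varphi\rangle_{L^2}=s>0$ and seek $\phi=s\varphi+w$ with $w=Q\phi\in X$.

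The first step is to invert the linear part off the kernel. The operator $L_{\omega,k}:=e^{i\theta}\Delta+(k-i\omega)$, restricted to $X\cap(H^2\cap H^1_0)$, is an isomorphism onto $X$, with $\|L_{\omega,k}^{-1}\|$ bounded uniformly for $(\omega,k)$ in a fixed small neighbourhood of the point where $k-i\omega=\lambda e^{i\theta}$, because simplicity of $\lambda$ keeps $k-i\omega-\lambda_n e^{i\theta}$ away from $0$ for all the other eigenvalues $\lambda_n$. The equation $QG(\phi,\omega,k)=0$ becomes $w=-L_{\omega,k}^{-1}Q\big(e^{i\gamma}|s\varphi+w|^\sigma(s\varphi+w)\big)$, and since $u\mapsto|u|^\sigma u$ is locally Lipschitz from $H^1_0(\Omega)$ into $L^2(\Omega)$ (under the subcriticality $\sigma\le 2/(d-2)$ of Proposition \ref{propgeral}), for small $s>0$ the right--hand side is a contraction on a ball of radius of order $s^{\sigma+1}$ in $X$. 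This gives a unique $w=w(s,\omega,k)$, continuous in its arguments, with $\|w\|_{H^1}\le Cs^{\sigma+1}$ (and $w\in H^2$ by elliptic regularity).

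There remains the scalar complex reduced equation $\langle G(s\varphi+w,\omega,k),\varphi\rangle=0$. Using $\langle\Delta(s\varphi+w),\varphi\rangle=-\lambda s$, $\langle|\varphi|^\sigma\varphi,\varphi\rangle=\|\varphi\|_{L^{\sigma+2}}^{\sigma+2}=:C_0>0$ and the bound on $w$, after dividing by $s$ it reads
\[ (k-\lambda\cos\theta)-i(\omega+\lambda\sin\theta)+e^{i\gamma}\big(C_0\,s^\sigma+\rho(s,\omega,k)\big)=0,\qquad \rho=O(s^{2\sigma}). \]
Its real and imaginary parts are $C_0\cos\gamma\,s^\sigma=\lambda\cos\theta-k-\Re(e^{i\gamma}\rho)$ and $\omega=-\lambda\sin\theta+C_0\sin\gamma\,s^\sigma+\Im(e^{i\gamma}\rho)$. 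Here $\gamma\neq\pi/2$ (so $\cos\gamma>0$) makes the first relation solvable for $s$, and $0<\lambda\cos\theta-k<\epsilon$ is precisely what makes the leading term $s_0:=\big((\lambda\cos\theta-k)/(C_0\cos\gamma)\big)^{1/\sigma}$ a small positive number, whence $\rho=O(s_0^{2\sigma})=o(\lambda\cos\theta-k)$. Reading the two relations as a continuous self--map of a small box around $\big(s_0,-\lambda\sin\theta+C_0\sin\gamma\,s_0^{\sigma}\big)$ in the $(s,\omega)$--plane, Brouwer's theorem furnishes a fixed point $(s(k),\omega(k))$; the corresponding $\phi=s(k)\varphi+w$ solves \eqref{GLelip1}, is nonzero since $\langle\phi,\varphi\rangle=s(k)>0$, and comes with the frequency $\omega=\omega(k)$ read off from the imaginary part.

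The main obstacle --- and the reason this is not a one--line invocation of the Crandall--Rabinowitz theorem --- is the low regularity of $u\mapsto|u|^\sigma u$ for $\sigma<1$, which forces the reduction to be done by hand with a uniformly controlled remainder $\rho=o(s^\sigma)$, so that the sign of the real part of the reduced equation is genuinely dictated by $\cos\gamma$ and solvability holds only in the one--sided regime $k<\lambda\cos\theta$. The ancillary technical points are keeping $\|L_{\omega,k}^{-1}\|$ and the contraction constant uniform as the neighbourhood shrinks with $\epsilon$ (where simplicity of $\lambda$ is used), and verifying that the resulting $\omega(k)$, obtained from the imaginary part of the reduced equation, has the asserted sign.
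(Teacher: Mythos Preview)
Your argument is correct and rests on the same underlying idea as the paper (bifurcation from the simple eigenvalue $\lambda$), but the implementation is genuinely different. The paper introduces an artificial coupling parameter $\mu$ in front of the nonlinearity, applies the Implicit Function Theorem in the variables $(\zeta,\omega,k)\in H_1\times\R^2$ at $\mu=0$ (where the equation is linear), computes $\partial k/\partial\mu=-\cos\gamma\,\|\varphi\|_{\sigma+2}^{\sigma+2}/\|\varphi\|_2^2<0$ to invert $\mu\mapsto k(\mu)$ locally, and then rescales $u=\mu^{1/\sigma}v$ to recover a solution of \eqref{GLelip1}. Your direct Lyapunov--Schmidt reduction instead takes the amplitude $s=\langle\phi,\varphi\rangle$ as the bifurcation parameter, solves the off-kernel equation by contraction, and closes the two-dimensional reduced system via a Brouwer fixed point. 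The trade-off is clear: the paper's route is shorter and yields a $C^1$ branch in $k$, but it relies on $C^1$ regularity of $v\mapsto|v|^\sigma v$ from $H_1$ to $L^2$, which is delicate for $\sigma<1$ since the eigenfunction vanishes on $\partial\Omega$; your route uses only the local Lipschitz property of the nonlinearity and therefore handles all $\sigma>0$ uniformly, precisely the robustness you flag in your closing paragraph. One remark on the sign of $\omega$: both constructions give $\omega(k)\to-\lambda\sin\theta\le0$ as $k\uparrow\lambda\cos\theta$, so the condition $\omega>0$ in the statement cannot be met for $\theta\in(0,\pi/2]$ and appears to be a misprint for $\omega\in\R$; this is not a defect of your argument.
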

\begin{rem}
	The above result is quite similar to that of \cite{Caz-Weiss2}: therein, it is proven that, over open bounded connected subsets of $\mathbb{R}^d$, the equation
	$$
	i\omega u=e^{i\theta}\Delta u + e^{i\gamma}|u|^\sigma u + ku
	$$
	has a solution $(\omega, u)\in \mathbb{R}\times H^1_0(\Omega)$ if $\sigma$ is sufficiently small and $(\lambda\cos\theta - k)\cos\gamma>0$. The goal of our result is to trade the freedom in $k$ for the freedom in $\sigma$.
\end{rem}

% 
% However, in the $1$-dimensional case, we can offer a collection
% of bound states and study his stability for the complex Ginzbourg-Landau equation in the 
% trigonometric form:
% \begin{equation}\tag{CGL*}\label{GLcomplex}
% u_t = e^{i\theta} \Delta u + e^{i \gamma} | u |^\sigma u + ku
% \end{equation}
% where $-\pi/2 < \theta, \gamma < \pi/2, \,k\in \R,\, u = u(t, x),\, x\in \R,\, t > 0.$
% 
% \noindent We look for solutions of \eqref{GLcomplex} in the form $ u = e^{i t} \phi (x) $ (bound-states), where
% $\phi \in H^1(\R)$ is a solution of the elliptic equation
% 
% \begin{equation}\label{GLelip1}
% i \phi = e^{i \theta} \Delta \phi + e^{i \gamma} | \phi |^\sigma \phi + k \phi
% \end{equation}
% 
\begin{thm}\label{thm:existbs}
	Consider $\Omega=\mathbb{R}$. 
	
	\noindent 1. If $\omega^2+k^2\neq 0$ and
	$
	\arg (k- i\omega )\neq \theta,
	$
	then equation \eqref{GLelip1} has at most one solution, up to gauge rotations and translations.
	
\noindent 2. Fix $-\pi/2 <\theta<\pi/2$ and $\omega, k\in \mathbb{R}$ such that $\omega\cos\theta + k\sin\theta\neq 0$. Define
	  \begin{equation}\label{eq:valor-d}
	d = \frac{ k \cos \theta - \omega\sin\theta  + \sqrt{\omega^2 + k^2}}
	{\omega\cos\theta + k \sin\theta}
	\end{equation}
	and let $\gamma\in (-\pi,\pi]$ be the unique solution of
		\begin{equation}\label{eq:restricaogamma}
	\tan(\gamma-\theta)=\frac{d(\sigma+4)}{\sigma+2-2d^2},\quad d\sin(\gamma-\theta)+\cos(\gamma-\theta)>0.
	%	 \frac{\sin (\gamma-\theta)} {d} - \cos (\gamma-\theta) = \frac{2}  {\sigma + 2}
	%	( d \sin (\gamma-\theta) + \cos (\gamma-\theta) ),
	\end{equation}
	Then
	the complex Ginzburg-Landau equation admits a bound-state of the form
	$$
	\phi=\psi\exp\left(id\ln\psi\right),
	$$
	where $\psi$ is the bound-state for the nonlinear Schrödinger equation, up to scaling and scalar multiplication. 

\end{thm}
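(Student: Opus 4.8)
\emph{Sketch of proof.} The two assertions are of different nature, and I would treat them separately, beginning with the constructive assertion~2.

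\textbf{Part 2.} The plan is to look for $\phi$ of the prescribed shape $\phi=\psi^{1+id}=\psi\,e^{id\ln\psi}$, where $\psi>0$ solves a rescaled Schr\"odinger profile equation $\psi''=A\psi-B\psi^{\sigma+1}$ for constants $A,B>0$ to be determined; this ansatz trades the unknown phase of $\phi$ for a power of its modulus. Since $\psi>0$ we have $|\phi|=\psi$, hence $|\phi|^\sigma\phi=\psi^\sigma\phi$, and from $\phi'/\phi=(1+id)\psi'/\psi$ one computes
\[
\frac{\phi''}{\phi}=(1+id)\frac{\psi''}{\psi}+(1+id)(id)\Big(\frac{\psi'}{\psi}\Big)^2 .
\]
Inserting $\psi''/\psi=A-B\psi^\sigma$ together with the first integral $(\psi'/\psi)^2=A-\frac{2B}{\sigma+2}\psi^\sigma$ (whose integration constant vanishes because $\psi,\psi'\to0$ at $\pm\infty$) makes the right-hand side affine in $\psi^\sigma$. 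Dividing \eqref{GLelip1} by $\phi$ (recall $\Delta=\partial_x^2$ here) and using that $\psi^\sigma$ is non-constant, one may match the constant term and the coefficient of $\psi^\sigma$ separately, obtaining
\[
i\omega-k=e^{i\theta}(1+id)^2A,\qquad e^{i\gamma}=e^{i\theta}(1+id)\Big(1+\frac{2id}{\sigma+2}\Big)B .
\]

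From these two identities the parameters are forced. Moduli in the first give $A=\sqrt{\omega^2+k^2}/(1+d^2)>0$ (well defined since $(\omega,k)\neq(0,0)$), while $\arg A=0$ requires $\arg(i\omega-k)=\theta+2\arctan d$; writing $\chi:=\arg(i\omega-k)-\theta$ and using the half-angle identity $\tan(\chi/2)=(1-\cos\chi)/\sin\chi$ together with $\cos\chi=(\omega\sin\theta-k\cos\theta)/\sqrt{\omega^2+k^2}$ and $\sin\chi=(\omega\cos\theta+k\sin\theta)/\sqrt{\omega^2+k^2}$, one obtains exactly $d$ as in \eqref{eq:valor-d} (the hypothesis $\omega\cos\theta+k\sin\theta\neq0$ is precisely $\sin\chi\neq0$, so $d$ is finite). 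Moduli in the second identity give $B=\big(\sqrt{1+d^2}\,\sqrt{1+4d^2/(\sigma+2)^2}\big)^{-1}>0$, and its argument reads $\gamma-\theta=\arctan d+\arctan\frac{2d}{\sigma+2}$; the tangent addition formula turns this into $\tan(\gamma-\theta)=d(\sigma+4)/(\sigma+2-2d^2)$, and since $B>0$ the correct branch is singled out by $\cos(\gamma-\theta)+d\sin(\gamma-\theta)=B(1+d^2)>0$, which is precisely \eqref{eq:restricaogamma}. With $A,B>0$ fixed, $\psi$ is the classical nonlinear Schr\"odinger profile after scaling and scalar multiplication, namely $\psi(x)=(A/B)^{1/\sigma}\,Y(\sqrt A\,x)$ with $Y$ the unique positive even solution of $Y''=Y-Y^{\sigma+1}$ (a power of $\mathrm{sech}$). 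Finally $\phi=\psi^{1+id}\in H^1_0(\R)$, since $|\phi|=\psi$ and $|\phi'|=\sqrt{1+d^2}\,|\psi'|$ are both in $L^2$ and $\phi\to0$ at $\pm\infty$ because the oscillatory factor $e^{id\ln\psi}$ stays bounded.

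\textbf{Part 1.} Writing $\phi=\rho\,e^{i\varphi}$ and splitting \eqref{GLelip1} into real and imaginary parts yields
\[
(\omega\sin\theta-k\cos\theta)\rho=\rho''-\rho(\varphi')^2+\cos(\gamma-\theta)\rho^{\sigma+1},\qquad (\rho^2\varphi')'=(\omega\cos\theta+k\sin\theta)\rho^2-\sin(\gamma-\theta)\rho^{\sigma+2}.
\]
The linearization of the second-order complex ODE at $0$ has characteristic exponents $\pm\sqrt{e^{-i\theta}(i\omega-k)}$, which have nonzero real part exactly when $e^{-i\theta}(i\omega-k)$ is not a negative real, i.e.\ exactly when $\arg(k-i\omega)\neq\theta$. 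Under this hypothesis $0$ is a hyperbolic rest point, its stable and unstable manifolds are two-dimensional, and every $\phi\in H^1_0(\R)$ is a homoclinic orbit with $\rho,\rho',\varphi'$ decaying exponentially at $\pm\infty$. Using this decay to evaluate the integrated imaginary equation (together with a suitable pairing of the two equations) at $\pm\infty$, I would deduce that necessarily $\varphi'=d\,\rho'/\rho$ with the same $d$ as in Part~2; hence, after a gauge rotation and a translation, $\phi=\rho^{1+id}$ and $\rho$ solves the rescaled Schr\"odinger profile equation, whose positive $H^1(\R)$ solution is unique up to translation (elementary in dimension one). This gives uniqueness modulo gauge and translation.

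\textbf{Main obstacle.} The algebra of Part~2 is routine once the ansatz $\phi=\psi^{1+id}$ is adopted: the values of $d$, $\gamma$, $A$, $B$ are then dictated, the only subtlety being the branch selection for $\gamma$. The genuine difficulty is the rigidity step in Part~1, namely proving that \emph{every} decaying solution has phase an affine function of $\ln\rho$. I expect this to require a real use of the hyperbolic structure---combining the two ODEs into a quantity that is invariant along trajectories and vanishes on homoclinics, thereby forcing $\varphi'\rho=d\rho'$---rather than a soft argument, and it is the step I would concentrate on.
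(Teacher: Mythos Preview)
Your Part~2 is correct and follows the same route as the paper: the ansatz $\phi=\psi^{1+id}$, the first integral $(\psi')^2/\psi=\epsilon\psi-\tfrac{2\eta}{\sigma+2}\psi^{\sigma+1}$, and matching constant and $\psi^\sigma$ coefficients. Your complex-number formulation is in fact cleaner than the paper's real/imaginary decomposition, but the content is identical.

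Part~1 is where you diverge from the paper, and the gap you flag as ``the main obstacle'' is real. You propose to show rigidity of the phase, i.e.\ that every $H^1$ solution must satisfy $\varphi'=d\,\rho'/\rho$; you do not carry this out, and it is not clear that the two real ODE's you write down yield such a conserved quantity without substantial further work. The paper bypasses this difficulty entirely with a \emph{topological} argument. After observing (as you do) that the linearization at $0$ is hyperbolic with two-dimensional stable and unstable manifolds $\mathcal S,\mathcal U$, it takes any bound-state $\phi\in\mathcal U\cap\mathcal S$ and forms its gauge--translation orbit
\[
\mathcal U_0=\{\,e^{i\rho}(\phi(x),\phi'(x)):\rho\in\mathbb T,\ x\in\R\,\}\subset\mathcal U.
\]
One checks that the parametrization $(\rho,x)\mapsto e^{i\rho}(\phi(x),\phi'(x))$ is injective (otherwise $\phi$ would be periodic and could not decay), so $\mathcal U_0$ is a two-dimensional submanifold of the two-dimensional $\mathcal U$; by invariance of domain it is open in $\mathcal U$, and a direct sequential argument shows it is closed in $\mathcal U$. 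Since $\mathcal U$ is connected, $\mathcal U_0=\mathcal U$. Any other bound-state lies on $\mathcal U$, hence on $\mathcal U_0$, and ODE uniqueness finishes. This avoids any computation of integrals of motion or phase rigidity; the only analytic input beyond hyperbolicity is that the gauge--translation orbit already has full dimension.
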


%\begin{rem}
%	We recall that, in the case $\Omega=\mathbb{R}$, the nonlinear Schrödinger equation has a unique bound-state. It would be interesting to see if the uniqueness of bound-state still holds for the complex Ginzburg-Landau equation. To do this, it would suffice to show that all bound-states of \eqref{GLcomplex} are of the form $\psi\exp(id\psi)$.
%\end{rem}

\begin{rem}
	We observe that the conditions $\omega^2+k^2\neq 0$ and $\arg(k-i\omega)\neq\theta$ imply $\omega\cos\theta + k\sin\theta\neq 0 $. 
\end{rem}

\begin{rem}
	In the particular case $\omega=1$ and $k=0$, the condition \eqref{eq:restricaogamma} reduces to
	\begin{equation}
	\frac{\cos \gamma} {\sin ( \gamma - \theta)} =\frac {\sigma} {\sigma + 4}.
	\end{equation}
	In particular, $\gamma > \theta$, which agrees with the nonexistence result. Moreover, for $ \gamma \leq \pi/4$, the above condition
	is never verified:
	$$ \frac{\cos \gamma} {\sin (\gamma - \theta)} > \frac{ \cos \gamma} {\sin \gamma} \geq 1 > 
	\frac{\sigma}  {\sigma + 4}.$$
\end{rem}

\noindent Finally, we present a stability result for bound-states with large $-k$:

\begin{thm}\label{thm:estabilidadebs}
	Fix $\omega=1$ and $0<\theta<\pi/2$. 
	Then, for $-k$ sufficiently large, the bound-state $\phi$ built in Theorem \ref{thm:existbs} is asymptotically stable.
\end{thm}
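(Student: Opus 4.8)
\textit{Step 1: rotating frame, rescaling, and the limiting problem.} Writing $u=e^{it}v$ turns $u=e^{it}\phi$ into the stationary solution $v\equiv\phi$ of $v_t=e^{i\theta}\Delta v+e^{i\gamma}|v|^\sigma v+(k-i)v$. By Theorem \ref{thm:existbs}, $\phi=c\,\bigl(\psi_0(\lambda\,\cdot)\bigr)^{1+id}$, with $\psi_0>0$ the ground state of $-\psi_0''+\psi_0=\psi_0^{\sigma+1}$ and $c,\lambda>0$, $d\in\R$, $\gamma$ the parameters fixed by \eqref{eq:valor-d}--\eqref{eq:restricaogamma} at $\omega=1$. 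Using the first integral $(\psi_0')^2=\psi_0^2-\tfrac{2}{\sigma+2}\psi_0^{\sigma+2}$ one computes $\lambda^2\sim c^\sigma\sim -k$ and $d\to d_\infty=-\tan(\theta/2)$ as $k\to-\infty$. Hence the dilation $v(t,x)=c^{1+id}\,\mathsf v(\lambda^2 t,\lambda x)$ brings the equation to
\begin{equation*}
\partial_s\mathsf v=e^{i\theta}\Delta\mathsf v+a_k\,|\mathsf v|^\sigma\mathsf v+b_k\,\mathsf v,\qquad \mathsf v\equiv\Phi_k:=\psi_0^{\,1+id},
\end{equation*}
with $a_k\to a_\infty$ and $b_k\to b_\infty=-\sec^2(\theta/2)<0$; the limiting equation $(E_\infty)$ has $\Phi_\infty=\psi_0^{\,1+id_\infty}$ for stationary profile (the identity $e^{i\theta}(1+id_\infty)^2=\sec^2(\theta/2)$ makes this consistent). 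Since the two dynamics are conjugate by a fixed dilation in space, time and amplitude, asymptotic stability of $e^{it}\phi$ in $H^1(\R)$ — necessarily up to the translation and gauge symmetries — is equivalent to the same statement for $\Phi_k$ under the rescaled flow, which is the object I would study.

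\textit{Step 2: linearized operators and essential spectrum.} Linearizing the rescaled equation at $\Phi_k$ gives a real-linear operator $\mathcal L_k w=e^{i\theta}\Delta w+b_k w+a_k\,D_w\!\bigl(|\Phi_k|^\sigma\Phi_k\bigr)[w]$ on $L^2(\R;\C)$, $D(\mathcal L_k)=H^2$, with $\mathcal L_k\to\mathcal L_\infty$ in the norm–resolvent sense. Gauge and translation invariance give $\mathcal L_k(i\Phi_k)=\mathcal L_k(\Phi_k')=0$. Since $\Phi_k$ decays exponentially, $D_w(|\Phi_k|^\sigma\Phi_k)[\cdot]$ is $\mathcal L_k$–relatively compact, so Weyl's theorem yields $\sigma_{\mathrm{ess}}(\mathcal L_k)=\{\,b_k-e^{i\theta}\xi^2:\xi\in\R\,\}\subset\{\re\lambda\le b_k<0\}$, and likewise for $\mathcal L_\infty$ with $b_\infty$ in place of $b_k$; in particular the essential spectrum is harmless.

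\textit{Step 3: spectral stability of $\mathcal L_\infty$ — the heart of the matter.} One must show $\sigma(\mathcal L_\infty)\subset\{\re\lambda<0\}\cup\{0\}$, that $0$ is an isolated, semisimple eigenvalue with eigenspace exactly $\mathrm{span}_\R\{i\Phi_\infty,\Phi_\infty'\}$, and that there is a gap $\re\bigl(\sigma(\mathcal L_\infty)\setminus\{0\}\bigr)\le-2c_0<0$. The only issue is the point spectrum in $\{\re\lambda\ge0\}$. I would exploit that $\Phi_\infty=\psi_0^{\,1+id_\infty}$ is an explicit algebraic deformation of the Schrödinger ground state: after undoing the phase $\psi_0^{id_\infty}$ and passing to amplitude–phase variables, $\mathcal L_\infty$ is realized as the sum of (a conjugate/rescaling of) the linearized Schrödinger operator at $\psi_0$ — whose spectrum is $i\R$, with a four–dimensional generalized kernel spanned by the gauge and translation directions together with the non–symmetric scaling/$\partial_\omega$ and Galilean modes (in the orbital–stability range $\sigma<4$) — and a dissipative part equal to $\cos\theta>0$ times a nonnegative self–adjoint operator plus lower order terms. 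One then checks that this dissipative part (i) moves the two rays of the (imaginary) essential spectrum strictly into $\{\re\lambda<0\}$ and creates no embedded eigenvalue; (ii) keeps the two symmetry directions at $0$; (iii) moves the two non–symmetric generalized–kernel directions strictly to the left, the leading–order eigenvalue displacement having negative real part precisely because of the subcriticality sign $\tfrac{d}{d\omega}\|\psi_\omega\|_{L^2}^2>0$; and (iv) produces no other eigenvalue with $\re\lambda\ge0$, since any such would survive to the undissipated limit and contradict spectral stability of the ground state. Altogether the generalized kernel is two–dimensional with no Jordan block, and the gap holds; this is the step where $0<\theta<\pi/2$ — i.e. the presence of genuine dissipative coupling — is used in an essential way.

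\textit{Step 4: transfer and nonlinear asymptotic stability.} By analytic perturbation theory and uniform resolvent bounds, for $-k$ large $\mathcal L_k$ inherits the picture of Step 3: $\sigma(\mathcal L_k)\subset\{\re\lambda\le-c_0\}\cup\{0\}$, $0$ semisimple of multiplicity $2$; hence, with $P_k$ the spectral projection onto the complement of $\ker\mathcal L_k$, $\|e^{s\mathcal L_k}P_k\|_{\mathcal L(H^1)}\le Ce^{-c_0 s}$. For data $H^1$–close to $\Phi_k$, decompose $\mathsf v(s)=e^{i\rho(s)}\Phi_k(\cdot-h(s))+w(s)$ with $w(s)\in\mathrm{Ran}\,P_k$; since $i\Phi_k,\Phi_k'$ span exactly the tangent space to the orbit $\{e^{i\rho}\Phi_k(\cdot-h)\}$, the implicit function theorem makes this well posed for $w$ small, and one obtains $\dot\rho,\dot h=O(\|w\|_{H^1}^2)$ together with $\partial_s w=\mathcal L_k w+N$, $N=O\bigl((\|w\|_{H^1}+|\dot\rho|+|\dot h|)^{1+\min(\sigma,1)}\bigr)$, using the local Lipschitz character of the nonlinearity in $H^1(\R)$ (Proposition \ref{propgeral}). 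A standard Gronwall/continuation argument then gives $\|w(s)\|_{H^1}\le Ce^{-c_0 s/2}\|w(0)\|_{H^1}$ (forcing, in particular, global existence of the perturbed solution) and $(\rho(s),h(s))\to(\rho_\infty,h_\infty)$. Undoing the dilation and the rotation, $S(t)u_0\to e^{i(t+\rho_\infty)}\phi(\cdot-h_\infty)$ in $H^1(\R)$ exponentially fast, which is the asserted asymptotic stability of the bound state. The main obstacle throughout is Step 3: the spectral analysis of the non–self–adjoint operator $\mathcal L_\infty$, and specifically verifying that the two neutral modes inherited from the Schrödinger ground state that are \emph{not} protected by a symmetry of the Ginzburg–Landau flow — together with the neutral essential spectrum — are pushed strictly into the open left half–plane by the dissipative part rather than becoming unstable; Steps 1, 2 and 4 are, respectively, a scaling computation, Weyl's theorem, and a by-now classical modulation scheme.
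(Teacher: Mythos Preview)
Your approach differs substantially from the paper's, and in one important respect it is more careful. The paper does not rescale or pass to a limiting operator: it writes the linearization as $L=M+N$ with $Mu=-e^{i\theta}u''+i\omega u-ku$ and $N$ the bounded zero-order part carrying $|\phi|^\sigma$, shows via Lemma~\ref{lemkrein} that $\sigma_e(L)\subset\{\re\lambda\ge -k\}$, and bounds any eigenvalue through $\re\lambda\ge -k-\|N\|\ge -k-(1+\sigma)\|\phi\|_{L^\infty}^\sigma$. Using $\|\phi\|_{L^\infty}^\sigma=\epsilon(\sigma+2)/(2\eta)$ with $\epsilon,\eta$ as in \eqref{alfa}--\eqref{beta}, the paper asserts that for $-k$ large this is strictly less than $-k$, whence $\sigma(L)\subset\{\re\lambda>0\}$ and the principle of linearized stability finishes the argument directly---no rescaling, no limiting problem, no modulation.

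You, by contrast, correctly observe that the gauge and translation symmetries of \eqref{equ-equilib} force $i\phi,\ \phi'\in\ker L$, so $0\in\sigma(L)$ always; strict asymptotic stability in the sense of Definition~1.3 is therefore impossible, and only orbital asymptotic stability (convergence to some $e^{i\rho_\infty}\phi(\cdot-h_\infty)$) can hold. Your modulation scheme in Step~4 is designed exactly to handle this two-dimensional kernel, which the paper's argument does not acknowledge. In fact the paper's key inequality $(1+\sigma)\|\phi\|_{L^\infty}^\sigma<-k$ can never be satisfied: since $\epsilon=\sqrt{1+k^2}\sim -k$ and, from \eqref{eq:restricaogamma}, $\eta=(\sigma+2)\big/\sqrt{(\sigma+2)^2+d^2(\sigma^2+4\sigma+8)+4d^4}<1$, one gets $(1+\sigma)\|\phi\|_{L^\infty}^\sigma\big/(-k)\to (\sigma+1)(\sigma+2)/(2\eta)>1$. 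This is consistent with $0$ being an eigenvalue, and it means your more elaborate route is not merely an alternative but a necessary repair.

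That said, your Step~3 is only a sketch. You outline a plausible strategy---conjugate $\mathcal L_\infty$ to the NLS linearization plus a dissipative remainder and argue that the two non-symmetry neutral modes move strictly into $\{\re\lambda<0\}$---but the verification that no eigenvalue enters $\{\re\lambda\ge 0\}$ apart from the protected pair, and that $0$ remains semisimple, is asserted rather than proved; this is, as you say, the heart of the matter and it is not carried out. You also tacitly invoke subcriticality $\sigma<4$ (through the sign of $\partial_\omega\|\psi_\omega\|_{L^2}^2$), a hypothesis absent from the theorem as stated. Steps~1, 2 and 4 are standard once Step~3 is in hand.
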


%
%\begin{thm}\label{bound-stat} \medskip
%
%
%\begin{enumerate}[1.]
%\item Assume $k=0$ in the \eqref{GLcomplex} equation. Let $\theta, \gamma$ and $\sigma$ with
%$ -\pi/2 < \theta, \gamma < \pi/2,\, \sigma > 0$, verifying the following relationship:
%$$ \frac {\cos \gamma} {\sin (\gamma - \theta)} > \frac {\sigma} {\sigma + 4}. $$
%Then, there exists a positive function $a_0 \in H^1 (\R)$ such that, 
%$$ u(t, x) = e^{i t} a_0 e^{i d_0 \ln a_0}, \quad \hbox{\it with} \quad  d_0 = \frac {1 - \sin \theta} {\cos \theta}, $$
%is a bound-state of \eqref{GLcomplex}.
%\medskip
%
%\item Assume $k < 0.$ Then, for any $\sigma > 0$, and $\theta\in (0, \pi/2)$ there exists $M > 0$ 
%such that for $-k > M$ 
%there exists $\gamma \in (-\pi/2, \pi/2)$ and a positive function $a \in H^1(\R)$ such that
%$$ u(t, x) = e^{i t} a\, e^{i d \ln a}, \quad \hbox{\it with} \quad  
%d = \frac {k \cos \theta -  \sin \theta + \sqrt{1 + k^2}} {\cos \theta + k \sin \theta}, $$
%is a bound-state of \eqref{GLcomplex}, asymptotically stable.
%
%
%\end{enumerate}
%\end{thm}
 
\vskip10pt
\section{Proof of the Theorem 1.4}
 
\vskip10pt

Let $ S(t)$ be a dynamical system on a Banach space $H$  and recall that a Lyapunov
function is a continuous function $W : H \rightarrow \R$ such that
$$ \dot W (u) := \limsup_{t\rightarrow 0^+} \, \frac{1} {t} [ W (S(t) u) - W(u) ] \leq 0$$
for all $u\in H$. 
The next lemma is mainly proved in \cite{Hale}.

\begin{lem}\label{lemaHale}
Let $S(t)$ be a dynamical system on a Banach space $(D, \| \, \|)$. Let $E$ a normed space
such that $D\hookrightarrow E$ and $W$ a Lyapunov function on $D$ such that
$$ W(u_0) \geq k_1 \| u_0 \|_E,\,\, k_1 > 0,\, u_0 \in D. $$
Then, the equilibrium point $0$ is $\|\,\|_E$ - stable in the sense that
$$ u_0 \in D,\,\,  \| u_0 \|\rightarrow 0 \Rightarrow \| S(t) u_0 \|_E \rightarrow 0, $$
 uniformly in $t \geq 0$.

\noindent Assume in addition that
$$ \dot W (u_0) \leq - k_2 \| u_0 \|_E,\,\, k_2 > 0, \, u_0 \in D.$$
Then, $lim_{t\rightarrow \infty} \| S(t) u_0 \|_E = 0$ for any $u_0 \in D$.

\end{lem}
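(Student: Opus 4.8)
The plan is to adapt the classical argument of \cite{Hale}, organized around one elementary real-variable fact: if $g\colon[0,\infty)\to\R$ is continuous and its upper right derivative $D^{+}g(t):=\limsup_{h\to0^{+}}\tfrac1h\big(g(t+h)-g(t)\big)$ satisfies $D^{+}g(t)\le c(t)$ for all $t$, with $c$ continuous, then $g(t)-g(0)\le\int_{0}^{t}c(s)\,ds$; in particular $D^{+}g\le0$ forces $g$ to be non-increasing (no differentiability of $g$ is needed). The only input from the dynamics is that, for a fixed $\DI\in D$, the map $g(t):=W(S(t)\DI)$ is continuous — since $t\mapsto S(t)\DI$ is continuous into $(D,\|\cdot\|)$ and $W$ is continuous there — and that, by the semigroup identity $S(t+h)=S(h)S(t)$,
$$D^{+}g(t)=\limsup_{h\to0^{+}}\tfrac1h\big(W\!\big(S(h)(S(t)\DI)\big)-W(S(t)\DI)\big)=\dot W\!\big(S(t)\DI\big).$$
As $W$ is a Lyapunov function this is $\le0$, so $t\mapsto W(S(t)\DI)$ is non-increasing and $W(S(t)\DI)\le W(\DI)$ for every $t\ge0$.

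For the stability statement I would then invoke the coercivity $W(v)\ge k_{1}\|v\|_{E}$ at $v=S(t)\DI$, which gives $\|S(t)\DI\|_{E}\le k_{1}^{-1}W(S(t)\DI)\le k_{1}^{-1}W(\DI)$ for all $t\ge0$. Since $W$ is continuous on $(D,\|\cdot\|)$ with $W(0)=0$ (the equilibrium $0$ being a minimum of $W$, as is the case for the functionals of Theorem \ref{teor1}), for any $\delta>0$ one chooses $\varepsilon>0$ so that $\|\DI\|<\varepsilon$ implies $W(\DI)<k_{1}\delta$, and hence $\sup_{t\ge0}\|S(t)\DI\|_{E}<\delta$; this is exactly the asserted implication $\|\DI\|\to0\Rightarrow\sup_{t\ge0}\|S(t)\DI\|_{E}\to0$. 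The embedding $D\hookrightarrow E$ is used only so that $\|v\|_{E}$ is finite on $D$ and $t\mapsto\|S(t)\DI\|_{E}$ is continuous.

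For the asymptotic decay, the stronger hypothesis $\dot W(v)\le-k_{2}\|v\|_{E}$ upgrades the computation above to $D^{+}g(t)\le-k_{2}\|S(t)\DI\|_{E}$, with $t\mapsto\|S(t)\DI\|_{E}$ continuous, so the real-variable lemma yields $k_{2}\int_{0}^{t}\|S(s)\DI\|_{E}\,ds\le W(\DI)-W(S(t)\DI)\le W(\DI)$ and therefore $\int_{0}^{\infty}\|S(s)\DI\|_{E}\,ds<\infty$. In the situation actually at play below, where $W$ is also bounded above by a multiple of $\|\cdot\|_{E}$ (the relevant Lyapunov functionals being, or being comparable to, norms), one moreover has $\|v\|_{E}\ge c\,W(v)$, hence $D^{+}g(t)\le-k_{2}c\,g(t)$; applying the lemma to $t\mapsto e^{k_{2}ct}g(t)$ — whose upper right derivative is $\le e^{k_{2}ct}\big(D^{+}g(t)+k_{2}c\,g(t)\big)\le0$ — gives $W(S(t)\DI)\le W(\DI)e^{-k_{2}ct}$ and thus $\|S(t)\DI\|_{E}\le k_{1}^{-1}W(\DI)\,e^{-k_{2}ct}\to0$, in fact exponentially. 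Without an upper bound on $W$, the same conclusion follows from $\int_{0}^{\infty}\|S(s)\DI\|_{E}\,ds<\infty$ by a Barbalat-type argument, provided $s\mapsto\|S(s)\DI\|_{E}$ is uniformly continuous on $[0,\infty)$, which holds once the positive orbit of $\DI$ is precompact in $(D,\|\cdot\|)$.

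There is no deep obstacle here. The two points that will require care are the precise formulation and invocation of the comparison lemma for continuous functions with controlled upper right Dini derivative (recall $g$ is not a priori differentiable), and — in the fully general one-sided formulation — the passage from time-integrability of $\|S(t)\DI\|_{E}$ to $\|S(t)\DI\|_{E}\to0$: this is the only genuinely nontrivial step and needs compactness of the orbit, but it evaporates in the applications of this paper, where $W$ controls $\|\cdot\|_{E}$ from both sides and the decay is exponential.
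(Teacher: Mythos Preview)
The paper does not actually prove this lemma; it simply states that it ``is mainly proved in \cite{Hale}'' and defers to that reference. Your sketch is a faithful reproduction of the classical argument and is correct. Your identification of the two genuine subtleties --- that $W(0)=0$ is needed for the stability conclusion (not explicit in the hypotheses but satisfied by the functionals $W_p$ and $V$ used in Section~2), and that the asymptotic decay in full generality requires either a two-sided bound $W\sim\|\cdot\|_E$ (giving Gronwall and exponential decay) or a Barbalat-type argument via precompactness of the orbit --- is accurate. Since in the applications of this paper the Lyapunov functionals are essentially powers of norms, both caveats dissolve, exactly as you note.
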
 
\medskip

%\noindent {\it Proof of the Theorem 1.4}.  
\begin{proof}[Proof of Theorem 1.4]
 1. Let denote by $S(t)$ the dynamical system associated
to \eqref{CGL}, where $S(t) u_0 \equiv u(t, u_0)$ represents the unique global solution of \eqref{CGL} 
under the hypothesis of the Proposition \eqref{propgeral} and define
$$ W_p (u) = \int_\Omega | u(x) |^p dx,$$
with $ 2 \leq p \leq \frac{2 N} {N - 2} $ if $N > 2$, $ 2 \leq p < \infty$ if $N =1, 2$
and $ u = u(t, u_0)$. It is clear that $W_p : H_0^1 (\Omega) \rightarrow \R$, is a continuous functional and,
from  $\dot W_p (u) = \nabla W(u) \cdot \frac {d} {d t} u(t)$, we get
\begin{equation}
\begin{split}
\dot W_p(u)  = &\, p \,\Re \int_\Omega | u |^{p-2} \overline{u} \left\{ (a + i \alpha) \Delta u -
 (b + i \beta) | u |^\sigma u + ku \right\} dx \\
  \le & \,p k \int_\Omega | u |^p dx - ap \int_\Omega | u |^{p-2} | \nabla u|^2 dx \\
 & - p b \int_\Omega | u |^p | u |^\sigma dx + p \alpha \Im \int_\Omega \nabla \left(| u |^{p-2}\right) \overline{u} \nabla u\, dx. \\
\end{split}
\end{equation}
Since
$$ \nabla | u |^{p-2} = \frac{p-2} {2} | u |^{p-4} ( u \nabla \overline{u} + \overline{u} \nabla u) $$
we obtain
$$ \left| p \alpha \Im \int_\Omega \nabla \left( | u |^{p-2}\right) \overline {u} \nabla u dx \right| \leq
p | \alpha | \frac {p-2} {2} \int_\Omega | u |^{p-2} | \nabla u |^2 dx. $$
Hence, if $ \alpha \frac{p-2} {2} \leq a $ and $ k \leq 0$ we derive that $ \dot W_p (u) \leq p k \| u \|_{L^p}^p$
and the conclusion follows from the Lemma 2.1. If $p = 2$ and $\Omega$ is bounded,
the above estimation reduces to
$$ \dot W_2 (u) = - 2 a \int_\Omega | \nabla u |^2 dx - 2 b \int_\Omega | u |^{\sigma +2} dx
+ 2 k \int_\Omega | u |^2 dx $$
and if $k \left( \frac {1} {\omega_N} | \Omega | \right)^{2/N} < a$,
 by Poincar\'e inequality, the conclusion remains valid.
 
 \medskip
 \noindent 2. We now consider $\Omega$ a bounded domain and we define a new functional:
 \begin{equation}\label{Vliap}
 V(u) := \frac {a} {2} \int_\Omega | \nabla u|^2 dx + \frac {b} {\sigma +2} \int_\Omega | u |^{\sigma + 2} dx
 - \frac {k} {2} \int_\Omega | u |^2 dx. 
 \end{equation}
 $V$ is a continuous real function on $H_0^1(\Omega)$ and, if 
 $$k \leq 0 \quad  {\rm or}\quad k  > 0,\,\, \frac {k} {a} 
 < \left( \frac {1} {\omega_N} | \Omega | \right)^{2/N} $$
  we have $ V(u) \geq c \| u \|_{H^1(\Omega)},\, c > 0$.
   In addition, for any $u\in H_0^1(\Omega) \cap H^2 (\Omega)$ and $ h \in H_0^1(\Omega)$
  we have $ V(u + h) = V(u) + L \cdot h + o (\| h \|_{H^1}) $, where
  $$ L \cdot h = - \Re \int_\Omega \left[ a \Delta \overline{u} - b | u |^\sigma \overline{u} 
  + k \overline{u} \right]  h \, dx. $$
  Therefore, for all $u = u(t) \in H_0^1(\Omega) \cap H^2 (\Omega)$ we get
  \begin{equation}
  \begin{split}
  \dot V(u) = & - \int_\Omega  \left| a \Delta u - b | u |^\sigma u + k u \right|^2  dx \\
  & - \Re \int_\Omega \left( a \Delta u - b | u |^\sigma u \right) i \left( \alpha \Delta u - \beta | u |^\sigma u \right) dx \\
  & - \Re \int_\Omega i \,k \overline{u}  \left( \alpha \Delta u - \beta | u |^\sigma u \right) dx \\
  \end{split}
  \end{equation}
  and, for $\frac {\alpha} {a} = \frac {\beta} {b} = c,\, c\in \R$, we obtain
  \begin{equation}\label{Vliap2} 
  \dot V (u(t)) = - \int_\Omega \left| a \Delta u - b | u |^\sigma u + k u \right|^2  dx \leq 0,\,\, t > 0.
  \end{equation}
  Note that
  $$ \frac {1} {t} \,[V(S(t) u_0) - V(u_0)] = \dot V (S (t^*) u_0)$$
  for some $0 < t^* < t$ and so \eqref{Vliap2} is true for all $t \geq 0$. Hence, the functional $V$ is a
  Lyapunov function and we have the stability in $H_0^1(\Omega)$ of the equilibrium solution $ u \equiv 0$.
  
  \noindent We prove now the asymptotic stability. First, we remark that
  \begin {equation}\label{Vliap3}
  \begin{split}
  - \dot V (u) = & a^2 \int_\Omega | \Delta u|^2 dx + b^2 \int_\Omega | u |^{2 \sigma + 2} dx + k^2 \int_\Omega | u |^2 dx \\
  & - 2 a b \,\Re \int_\Omega \Delta u | u |^\sigma \overline{u} dx + 2 a k \, \Re \int_\Omega \Delta u \overline{u} dx \\
  & - 2 b k\, \Re \int_\Omega | u |^{\sigma + 2} dx. \\
  \end{split}
  \end{equation}
  The first, fourth, fifth and sixth terms in the right hand side can be estimate as follows :
  $$ \int_\Omega | \nabla u|^2 dx \leq \left| \int_\Omega \Delta u \overline{u} \,dx \right| \leq
  \left( \int_\Omega | \Delta u |^2 dx \right)^{1/2} \left( \int_\Omega | u |^2 dx \right)^{1/2}
  $$
  and by the Poincar\'e innequality,
  $$ \left( \int_\Omega | \Delta u |^2 dx \right)^{1/2} \geq \left(\frac {1} {\omega_N} | \Omega | \right)^{- 1/N}
  \left( \int_\Omega | \nabla u |^2 dx \right)^{1/2}. $$
  It follows that
  \begin{equation}\label{Vliap4}
  a^2 \int_\Omega | \Delta u |^2 dx \geq a^2 \left(\frac {1} {\omega_N} | \Omega | \right)^{- 2/N}
   \int_\Omega | \nabla u |^2 dx.
   \end{equation}
   Next
   \begin{equation}
   \begin{split}
   \Re \int_\Omega \Delta u | u |^\sigma \overline{u} dx  & = -\int_\Omega | \nabla u |^2 | u |^\sigma dx \\
   & - \frac {\sigma} {2} \Re \int_\Omega | u |^{\sigma -2} \nabla u \cdot ( \nabla u \overline{u}
   + u \nabla \overline{u} ) \overline{u} \, dx \\
   & = - \int_\Omega | \nabla u |^2 | u |^\sigma dx - \frac {\sigma} {2} \int_\Omega | \nabla u |^2 | u |^\sigma dx \\
   & \quad - \frac {\sigma} {2} \Re \int_\Omega | u |^{\sigma -2} (\nabla u \cdot  \nabla u ) \overline{u}^2 dx  \\
   & \leq - \int_\Omega | \nabla u |^2 | u |^\sigma dx \\
   \end{split}
   \end{equation}
   and so
   \begin{equation}\label{Vliap5}
   - 2 a b \,\Re \int_\Omega \Delta u | u |^\sigma \overline{u} dx \geq 2 a b \int_\Omega | \nabla u |^2 | u |^\sigma dx.
   \end{equation}
   Also
   \begin{equation}\label{Vliap6}
   2 a k\, \Re \int_\Omega \Delta u \overline{u} dx = - 2 a k \int_\Omega | \nabla u |^2 dx. 
   \end{equation}
   Finally
   \begin{itemize}
   \item[$i)$] If $k \leq 0 $, then
   $$ - \dot V (u) \geq c \int_\Omega | \nabla u |^2 dx,\quad c = a^2 \left(\frac {1} {\omega_N} | \Omega | \right)^{- 2/N} $$
   \item[$ii)$]  If $ 0 < k < \frac {a}  {2} \left(\frac {1} {\omega_N} | \Omega | \right)^{- 2/N} $
   \end{itemize}
   it follows from \eqref{Vliap3}, \eqref{Vliap4} and \eqref{Vliap5} that
   $$ - \dot V (u) \geq \delta \int_\Omega | \nabla u |^2 dx $$
   with $ \delta = a^{2} \left(\frac {1} {\omega_N} | \Omega | \right)^{- 2/N}  - 2 a k > 0$ and we obtain,
   by the Lemma 2.1, in any case,
   $i)$ or $ii)$, the asymptotic stability and the decay $\| u(t, u_0) \|_{H^1} \rightarrow 0,$ as $ t\rightarrow \infty$,
   for any $u_0 \in H_0^1( \Omega)$.
   \end{proof}
 
\vskip10pt
   \section{Proof of the Theorem 1.5}
    
   \vskip10pt
  Consider the \eqref{CGL} problem with $a, k > 0,\, b< 0, \,\alpha, \beta \in \R$ and $\Omega$ a bounded 
  domain of class $C^2$. Assume that $ 0 < \sigma \leq \frac{2} {N - 2}$ if $N > 2$, $0 < \sigma $ if $N = 1, 2$
   and $ \alpha /a = \beta /b$.
  As we pointed out before, for any $u_0\in H_0^1(\Omega)$ there exists a maximal solution of \eqref{CGL}
  defined on $[0, T), \,T > 0$, denoted by $S(t) u_0 \equiv u(t, u_0)$.
   Suppose that $\sup_{t\geq 0} \| S(t) u_0 \|_{H^1} \leq M$ for some $M > 0$. Then $\{ S(t) u_0 \}_{t\geq 0}$
   is a relatively compact set in $H_0^1(\Omega)$ (this is a consequence of the fact that the operator
   $ (a + i \alpha ) \Delta $ has compact resolvent (see e.g. \cite{Henry}, pag.57)) and therefore, it follows
   that the $\omega$ - limit set of $u_0$,
   $$ \omega (u_0) = \{ y\in H_0^1 (\Omega) : \hbox{\rm there exists}\,\,  t_n \rightarrow \infty \,\,
   \hbox{such that}\,\, S(t_n) u_0 \rightarrow y \} ,$$
   is a nonempty, invariant and compact set (see e.g. \cite{Haraux}).
   
   \noindent Consider once again the Lyapunov functional
   $$ V(u) :=  \frac{1}{2}\int_\Omega \left\{ a | \nabla u|^2  + \frac {2 b} {\sigma +2}  | u |^{\sigma + 2} 
 - k  | u |^2 \right\} dx $$
 and note (see \eqref{Vliap2}) that for any $u\in H_0^1 (\Omega),\, \dot V (u) = 0$ if and only if
 \begin{equation}\label{Dirich}
 - \Delta u - \frac{k} {a} u  =  - \frac {b} {a} | u |^\sigma u 
 \end{equation}
 \medskip
 
 \begin{proof}[Proof of Theorem 1.5.] Step 1. 
 First of all we show that there exists a small enough positive number, $\eta > 0$, such that do not
 exists a nontrivial solution $u \in H_0^1 (\Omega)$ of \eqref{Dirich} with $ \| u \|_{H^1}  =  \eta$.
 Let  $u \in H_0^1 (\Omega),\, \| u \|_{H^1} = \eta $ be a solution of \eqref{Dirich}. Since $k/a \in \rho (- \Delta )$
 and $ | u |^\sigma u \in L^2 (\Omega )$ it follows that $ u \in H^2 (\Omega ) \cap H_0^1 (\Omega) $ and
 $$ \| u \|_{H^2} \leq c\, \| u \|_{H^1}^{\sigma +1} \leq  c \, \eta^{\sigma + 1} $$
 which implies $u \equiv 0$ if $\eta $ is small enough.
 
  \medskip
 \noindent Step 2. Consider the set
 $$ K = \{ \phi \in H_0^1 (\Omega) : V (\phi ) < 0 \} $$
 and let be $k/a \in (\lambda_n, \lambda_{n+1})$ for some $n = 1, 2, \cdots , $
 with $\lambda_n $ the eigenvalues of $- \Delta$. Then, for any $\varepsilon > 0$, 
 $K \cap U_\varepsilon (0) \not = \emptyset$, where $U_\varepsilon (0) $ is the $\varepsilon$ - neighbourhood
 of $0$ in $H_0^1 (\Omega)$. To prove that, take $k/a = \lambda_n + \delta,\,\, 0 < \delta < \lambda_{n+1}
 - \lambda_n$ and consider $u_n$ the eigenfunction associated to $\lambda_n$ with $\| u_n \|_{H^1} < \varepsilon$.
 Since $ - \Delta u_n = \lambda_n u_n$ we obtain
  \begin{multline*}
a \int_\Omega  | \nabla u_n |^2 dx - k \int_\Omega | u_n |^2  dx  =  a \left[ \int_\Omega | \nabla u_n |^2 dx
 - \frac {k}{a} \int_\Omega | u_n |^2  dx \right] \\
  = a \left[  \int_\Omega | \nabla u_n |^2 dx - \lambda_n \int_\Omega  | u_n |^2 dx
   - \delta \int_\Omega | u |^2  dx \right]  < 0 
 \end{multline*}
 and so $V (u_n) < 0$. 
 
 \medskip
 \noindent Step 3. Let $U$ be an open neighbourhood of $0$ in $ H_0^1(\Omega )$ such  that
 $ U \subset \overline{U} \subset U_\eta (0)$, where $ \eta > 0$ was determined before in the step 1. and 
 let be $ \phi \in K \cap U$. We claim that $S(t) \phi \in \partial \,\overline{U}$ for some $ t > 0$.
 
 \noindent Suppose that we have $\{ S(t) \phi \} \subset U$ for all $t > 0$. Since $\overline{\{ S(t) \phi \}}$ is
 a compact set and $V (S(t) \phi )$ is a nonincreasing function of $t \geq 0$, it turns out
 that $ l = \lim_{t\rightarrow \infty} V (S(t) \phi )$ exists in $\R$. On the other hand, by the precompacity
 of $\{ S(t) \phi \}_{t\geq 0}$, the $\omega$ - limit set $\omega (\phi)$ is nonempty. Let be
 $y \in \omega (\phi) \subset U_\eta (0)$; it is clear that $ V(y) = l$ and, by the invariance of
 $\omega (\phi)$, $S(t) y \in \omega (\phi)$ and so $ V (S(t) y) = l$. It follows that
 $\dot V (y) = 0$ and by the step 1., $y = 0$. Therefore, we conclude that $\omega (\phi) = \{ 0 \}$.
 But, for any $z \in \omega (\phi),\, z = \lim_{t_n\rightarrow \infty} S(t_n) \phi$,
 $$ V (z) = \lim_{t_n\rightarrow \infty} V (S(t_n) \phi ) \leq V (\phi) < 0$$
 which is absurd. We have proved that $S(t) \phi$ must reach $ \partial \, \overline{U}$ for
 some $t > 0$ which means the instability of the equilibrium solution $ u \equiv 0$. 
 \end{proof}
 
\vskip10pt
 \section{Existence and stability of bound-states of \eqref{GLcomplex}}
 \vskip10pt
 
 \begin{proof}[Proof of Proposition \ref{prop:naoexistem}]
 	Multiply the equation by $\bar{\phi}$ and integrate:
 	$$
 	\omega \|\phi\|_2^2 + \sin\theta \|\nabla \phi\|_2^2 = \sin\gamma \|\phi\|_{\sigma+2}^{\sigma+2},\quad \cos\gamma \|\nabla \phi\|_2^2 = \cos\gamma \|\phi\|_{\sigma+2}^{\sigma+2}.
 	$$
 	Then one has $\omega\cos\theta\|\phi\|_2^2=\sin(\gamma-\theta)\|\phi\|_{\sigma+2}^{\sigma+2}$. If $\phi\not\equiv 0$, then either $\theta=\pi/2$ and
 	$$
 	\sin(\gamma-\theta)=0
 	$$
 	or $\theta<\pi/2$ and
 	$$
 	\sin(\gamma-\theta)>0.
 	$$
 	From the assumptions on $\gamma$ and $\theta$, both cases are impossible.
 	\end{proof}
 
 We now focus on the proof of Theorem \ref{thm:existbslimitado}. 
 Define $H:=\{u\in H^1_0(\Omega): \Delta u\in L^2(\Omega) \}$. This space is a real Hilbert space when equipped with the scalar product
 $$
 (u,v)=\Re \int_{\Omega} u\bar{v} + \Re \int_\Omega \Delta u\Delta \bar{v}.
 $$
 
 Let $\lambda$ be an eigenvalue of $-\Delta: H\mapsto L^2(\Omega)$. Assuming that the corresponding eigenspace is of the form $\mathbb{C} \phi$, we set
 $$
 H=\mathbb{C} \phi \oplus H_1, \quad H_1=(\mathbb{C} \phi)^\perp.
 $$
 
 \begin{lem}
 	Suppose that $\Omega$ is a bounded, connected, open subset of $\mathbb{R}^d$ such that the correspoding Laplace-Dirichlet operator has a simple eigenvalue $\lambda$. Fix $\theta, \gamma\in \mathbb{R}$ and $\sigma>0$. Then there exist $\mu_0>0$ and $C^1$ mappings
 	$$
 	v:(-\mu_0,\mu_0)\mapsto H, \quad \omega, k:(-\mu_0,\mu_0)\mapsto \mathbb{R}
 	$$
 	such that $v(0)=\phi$, $\omega(0)=-\lambda\sin\theta$, $k(0)=\lambda\cos\theta$ and
 	$$
 	\Delta v + \mu e^{i(\gamma-\theta)}|v|^\sigma v + (k-i\omega)e^{-i\theta}v=0.
 	$$ 
 \end{lem}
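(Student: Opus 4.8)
The plan is to realize the equation as the zero set of a $C^{1}$ map on a product of Banach spaces and to apply the implicit function theorem at $\mu=0$; the two real parameters $\omega,k$, together with a two-scalar normalization of $v$, will compensate exactly for the two-real-dimensional kernel produced by the simple eigenvalue. Concretely, set
$$
F(\mu,v,\omega,k):=\Delta v+\mu\,e^{i(\gamma-\theta)}|v|^{\sigma}v+(k-i\omega)e^{-i\theta}v ,
$$
seen as a map $\R\times H\times\R\times\R\to L^{2}(\Omega)$, where $L^{2}(\Omega)$ is regarded as a real Hilbert space with $\langle u,w\rangle=\Re\int_\Omega u\bar w$. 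We may assume $\phi$ is real-valued and $\|\phi\|_{L^{2}}=1$. With $\omega_{0}=-\lambda\sin\theta$ and $k_{0}=\lambda\cos\theta$ one has $(k_{0}-i\omega_{0})e^{-i\theta}=\lambda$, so that $F(0,\phi,\omega_{0},k_{0})=\Delta\phi+\lambda\phi=0$; this is the base point. For $F$ to be $C^{1}$ one needs the Nemytskii operator $v\mapsto|v|^{\sigma}v$ to be $C^{1}$ from $H$ into $L^{2}(\Omega)$, which rests on $s\mapsto|s|^{\sigma}s$ being $C^{1}$ on $\C$ and on a Sobolev embedding of $H$ into a suitable $L^{q}(\Omega)$.

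Next I would record the Fredholm structure. As $\lambda$ is a simple eigenvalue of the Dirichlet Laplacian — whose domain is precisely $H$ — the operator $\Delta+\lambda\colon H\to L^{2}(\Omega)$ is self-adjoint with kernel $\C\phi$ (two-dimensional over $\R$) and closed range $R:=\{g\in L^{2}(\Omega):\int_\Omega g\bar\phi=0\}$; indeed $\int_\Omega(\Delta h+\lambda h)\bar\phi=0$ for every $h\in H$ by Green's identity, and $\Delta+\lambda$ restricted to $H_{1}=\{w\in H:\int_\Omega w\bar\phi=0\}$ is a Banach-space isomorphism onto $R$. Differentiating $F$ in $(v,\omega,k)$ at the base point yields
$$
D_{(v,\omega,k)}F(0,\phi,\omega_{0},k_{0})(h,\dot\omega,\dot k)=(\Delta+\lambda)h+(\dot k-i\dot\omega)e^{-i\theta}\phi .
$$
Since $(\dot k-i\dot\omega)e^{-i\theta}\phi$ ranges over all of $\C\phi$ as $(\dot\omega,\dot k)\in\R^{2}$, this differential is onto $L^{2}(\Omega)$, with kernel exactly $\C\phi$ (attained by $h\in\C\phi$, $\dot\omega=\dot k=0$).

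To cut this residual two-dimensional kernel and obtain a genuine one-parameter branch, I would augment $F$ by the normalization $\int_\Omega(v-\phi)\bar\phi=0$ (two real conditions), i.e. consider
$$
\widetilde F(\mu,v,\omega,k):=\Bigl(F(\mu,v,\omega,k),\ \Re\int_\Omega(v-\phi)\bar\phi,\ \Im\int_\Omega(v-\phi)\bar\phi\Bigr)\in L^{2}(\Omega)\times\R\times\R .
$$
Its partial differential in $(v,\omega,k)$ at the base point sends $(h,\dot\omega,\dot k)$ to $\bigl((\Delta+\lambda)h+(\dot k-i\dot\omega)e^{-i\theta}\phi,\ \Re\int_\Omega h\bar\phi,\ \Im\int_\Omega h\bar\phi\bigr)$, and this is a bounded bijection: given a target, solve $(\Delta+\lambda)h_{1}=g_{\perp}$ in $H_{1}$ for the component in $R$, choose $(\dot\omega,\dot k)$ for the component in $\C\phi$, and adjust $h$ by an element of $\C\phi$ to meet the two scalar constraints; injectivity follows from the kernel computation above together with the constraints. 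The implicit function theorem then provides $\mu_{0}>0$ and $C^{1}$ maps $\mu\mapsto(v(\mu),\omega(\mu),k(\mu))$ on $(-\mu_{0},\mu_{0})$ with $\widetilde F\equiv0$ and $(v(0),\omega(0),k(0))=(\phi,\omega_{0},k_{0})$; in particular $v(\mu)$ solves the stated elliptic equation and has the prescribed values at $\mu=0$. The normalization loses no generality because the equation is invariant under $v\mapsto e^{i\varrho}v$ and, after rescaling $\mu$, under $v\mapsto sv$, so one may fix the $\C\phi$-component of $v$ to equal $\phi$.

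The step I expect to be the main obstacle is the first one: pinning down a functional framework in which $F$ is genuinely $C^{1}$, that is, the Fréchet differentiability of the power nonlinearity between the chosen spaces — this is where any restriction on $\sigma$ in terms of the dimension $d$ must enter — and, in the same vein, the transversality bookkeeping ensuring that the pair $(\omega,k)$ plus the normalization matches the real two-dimensional kernel exactly. Everything past that is a routine application of the implicit function theorem.
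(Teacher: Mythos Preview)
Your proposal is correct and follows essentially the same route as the paper's proof: both apply the implicit function theorem to $F$ at $(\mu,v,\omega,k)=(0,\phi,-\lambda\sin\theta,\lambda\cos\theta)$, using the two real parameters $(\omega,k)$ to compensate for the two-real-dimensional kernel of $\Delta+\lambda$. The only cosmetic difference is that the paper imposes the normalization by writing $v=\phi+\zeta$ with $\zeta\in H_1=(\mathbb{C}\phi)^\perp$ from the outset---so that $F:\mathbb{R}\times H_1\times\mathbb{R}^2\to L^2(\Omega)$ and the Jacobian in $(\zeta,\omega,k)$ is checked directly to be a bijection---rather than appending your two scalar constraints to the codomain; the paper also does not address the $C^1$ regularity of the Nemytskii map that you (rightly) flag.
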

 
 \begin{proof}
 	Define the mapping $F:\mathbb{R}\times H_1\times \mathbb{R}\times \mathbb{R}\mapsto L^2(\Omega)$ as
 	\begin{equation}\label{eq:definicaoF}
 	F(\mu, \zeta, \omega, k)=\Delta v + \mu e^{i(\gamma-\theta)}|v|^\sigma v + (k-i\omega)e^{-i\theta}v, \quad v=\phi+\zeta.
 	\end{equation}
 	If one sets
 	$$
 	k_0-i\omega_0 = \lambda e^{i\theta}, 
 	$$
 	it follows that $F(0,0,\omega_0, k_0)=0$. Furthermore, the mapping $(\zeta, \omega, k)\mapsto F(\mu, \zeta, \omega, k)$ is of class $C^1$ and
 	$$
 	\frac{\partial F}{\partial \zeta}(\mu,\zeta,\omega, k) w=\Delta w + \mu e^{i(\gamma-\theta)}\left(|v|^\sigma w + \sigma |v|^{\sigma-2}v\Re (\bar{v}w)\right)  + (k-i\omega)e^{-i\theta}w,
 	$$
 	$$
 	\frac{\partial F}{\partial \omega}(\mu,\zeta,\omega, k)= -ie^{-i\theta} v,\quad \frac{\partial F}{\partial k}(\mu,\zeta,\omega, k)= e^{-i\theta} v.
 	$$
 	Now we check that the jacobian
 	$$
 	J=\frac{\partial F}{\partial (\zeta, \omega, k)}(0,0,\omega_0,k_0): H_1\times \mathbb{R}\times\mathbb{R} \mapsto L^2(\Omega)
 	$$
 	is a bijection. Applying to an element $(w,y,z)\in H_1\times \mathbb{R}\times\mathbb{R}$, we have
 	$$
 	J(w,y,z)=\Delta w + \lambda w + e^{-i\theta}(z-iy) \phi.
 	$$
 	If $J(w,y,z)=0$, then
 	$$
 	0=\int J(w,y,z)\bar{\phi} = e^{-i\theta}(z-iy)\|\phi\|_2^2,
 	$$
 	which implies $y,z=0$. Thus $-\Delta w=\lambda w$ and so $w$ is an eigenvector with eigenvalue $\lambda$. However, since $w\in H_1$, this means that $w=0$. Hence $J$ is injective. On the other hand, given $f\in L^2(\Omega)$, write
 	$$
 	f=-e^{-i\theta}(\tilde{z}-i\tilde{y})\phi + \psi, \quad \int_\Omega \psi\bar{\phi}=0.
 	$$
 	The orthogonality condition implies that there exists $\tilde{w}\in H_1$ such that $\Delta \tilde{w} + \lambda \tilde{w} = \psi$. Then
 	$$
 	J(\tilde{w},\tilde{y},\tilde{z})= f,
 	$$
 	which shows that $J$ is surjective.
 	
 	With the above considerations, one may apply the Implicit Function Theorem \cite[Theorem 4.B]{zeidler} and the proof is finished.
 \end{proof}
 
 \begin{proof}[Proof of Theorem \ref{thm:existbslimitado}]
 	Consider the mappings $v,\omega, k$ from the previous theorem and the mapping $F$ as in \eqref{eq:definicaoF}. Then
 	$$
 	F(\mu, \zeta(\mu), \omega(\mu), k(\mu)) = 0, \quad \mu\in (-\mu_0,\mu_0).
 	$$
 	Differentiating with respect to $\mu$ at $\mu=0$, we obtain
 	$$
 	e^{i(\gamma-\theta)}|\phi|^\sigma \phi + \Delta w + \lambda w + e^{-i\theta}\left(\frac{\partial k}{\partial \mu} - i\frac{\partial \omega}{\partial \mu}\right)\phi = 0,\quad w=\frac{\partial v}{\partial \mu}.
 	$$
 	Multiplying by $\bar{\phi}$ and integrating over $\Omega$, we arrive at
 	$$
 	\frac{\partial k}{\partial \mu}=-\cos\gamma\frac{\|\phi\|_{\sigma+2}^{\sigma+2}}{\|\phi\|_2^2}<0,\quad \frac{\partial \omega}{\partial \mu}=\sin\gamma\frac{\|\phi\|_{\sigma+2}^{\sigma+2}}{\|\phi\|_2^2}.
 	$$
 	Thus the mapping $\mu\mapsto k(\mu)$ is locally invertible at 0, which implies that one may write $\mu=\mu(k)$, $v=v(k)$ and $\omega=\omega(k)$, for $|k-\lambda\cos\theta|<\epsilon$. Finally, if $\lambda\cos\theta-k>0$, then $\mu(k)>0$ and so $u(k)=\mu(k)^{\frac{1}{\sigma}}v(k)$ satisfies \eqref{GLelip1}.
 \end{proof}

 \begin{proof}[Proof of Theorem \ref{thm:existbs}]
 We start with the uniqueness statement. We write the equation as a system of ODE's and linearize around the trivial solution. One then checks that the linear system has two eigenvalues $\pm\lambda$, with $\Re \lambda>0$ and that each eigenvalue has a two-dimensional eigenspace. It follows that stable and unstable manifolds ($\mathcal{S}$ and $\mathcal{U}$) for the full equation \eqref{GLelip1} have dimension two.
 
 Given a bound-state $\phi\in \mathcal{U}\cap\mathcal{S}$, define
 $$
 \mathcal{U}_0:=\{ e^{i\rho}(\phi(x),\phi'(x))\ :\ \rho\in \mathbb{T}, x\in \mathbb{R}  \}.
 $$
 We claim that, if for some $(\rho_0,x_0)\neq (\rho_1, x_1) $, 
 $$
 e^{i\rho_0}(\phi(x_0),\phi'(x_0))=e^{i\rho_1}(\phi(x_1),\phi'(x_1)),
 $$
 then $\phi$ would not go to $0$ as $t\to+\infty$.  Indeed, either $x_1=x_0$ (which implies the contradiction $\rho_1=\rho_2$) or $x_1\neq x_0$. If $x_1>x_0$, then the gauge and translation invariances of the equation imply that, for $\delta=x_1-x_0$,
 $$
 (\phi(x_0+n\delta),\phi'(x_0+n\delta))=e^{in(\rho_0-\rho_1)}(\phi(x_0),\phi'(x_0)),\quad n\in\mathbb{N}
 $$
 which is again a contradiction when $n\to\infty$. Hence $\mathcal{U}_0$ is a two-dimensional submanifold of $\mathcal{U}$. 
 
 Next, we show that $\mathcal{U}_0$ is closed in $\mathcal{U}$: given $(y,z)\in \mathcal{U}\cap\overline{\mathcal{U}_0}$, then
 $$
 e^{i\rho_n}(\phi(x_n),\phi'(x_n))\to (y,z), \quad (\rho_n, x_n)\in \mathbb{T}\times\mathbb{R}.
 $$
 If $(\rho_n, x_n),\ n\in \mathbb{N}$, remains in a bounded set, then, up to a subsequence, $(\rho_n, x_n)\to (\rho_0,x_0)$ and so
 $$
 (y,z)=e^{i\rho_0}(\phi(x_0),\phi'(x_0))\in \mathcal{U}_0.
 $$
 If not, then there exists a subsequence $(x_{n_j})_{j\in\mathbb{N}}$ such that $x_{n_j}\to \pm \infty$. But then $(y,z)=(0,0)\notin \mathcal{U}$, which is absurd.
 We conclude that $\mathcal{U}_0$ is closed in $\mathcal{U}$
 
 From the invariance of domains, $\mathcal{U}_0$ is open in $\mathcal{U}$. Since $\mathcal{U}$ is connected, $\mathcal{U}_0=\mathcal{U}$ and so, up to gauge rotations, there can only be one solution on the unstable manifold. Since  any solution of \eqref{GLelip1} must lie on $\mathcal{U}$, one obtains uniqueness of bound-states.
 
 Now we focus on the existence statement. We look for solutions $\phi \in H^1(\R)$
 	of \eqref{GLelip1} or, in an equivalent way,
 	\begin{equation}\label{equ-est2}
 	\phi'' = \omega e^{i \tilde \theta} \phi -  e^{i \tilde \gamma} | \phi |^\sigma \phi + i k\,  e^{i \tilde \theta} \phi
 	\end{equation}
 	with $ \tilde \theta = \pi/2 - \theta,\, \tilde \gamma = \gamma - \theta$.
 	
 	\noindent Let us search a solution of the equation \eqref{equ-est2}, $ \phi \in H^1(\R)$, of the form
 	\begin{equation}\label{sol}
 	\phi = \psi\, \exp (i \,d\, \ln \psi) 
 	\end{equation}
 	where $d\in \R$ and $\psi > 0$ is the unique solution of the stationary Schr\"odinger equation
 	\begin{equation}\label{Schr}
 	\psi'' = \epsilon \,\psi - \eta \,\psi^{\sigma + 1}, \quad \epsilon, \eta > 0. 
 	\end{equation}
 	First, one has
 	\begin{equation}\label{second-deriv}
 	\phi'' ( x ) = \left[ \psi'' (x) (1 + i d) + id (1 + i d) \frac{\psi'(x)^2} {\psi(x)} \right] \exp ( i d \ln \psi(x) ).
 	\end{equation}
 	Next, we note that if $\psi$ is a solution of \eqref{Schr}, then a direct integration of the equation yields
 	\begin{equation}\label{Schr-new}
 	\frac{(\psi')^2} { \psi} = \epsilon\, \psi - \frac {2 \eta} {\sigma + 2} \,\psi^{\sigma + 1}.
 	\end{equation}
 	It follows from \eqref{equ-est2} that
 	\begin{align*}
 		\psi'' - d^2 \frac{(\psi')^2} {a} & = \omega\cos \tilde{\theta} \,\psi - k \sin \tilde{\theta} \,\psi - \cos \tilde{\gamma} \,\psi^{\sigma +1}, \\
 		d \psi'' + d \frac{(\psi')^2} {\psi} & = \omega\sin \tilde{\theta} \, \psi + k \cos \tilde{\theta} \,\psi - \sin \tilde{\gamma} \,\psi^{\sigma +1} 
 	\end{align*}
 	and so
 	
 	\begin{multline*}
 		( 1 + d^2) \psi''   = [\omega(d \sin \tilde{\theta} + \cos \tilde{\theta}) 
 		+ k (  d \cos \tilde{\theta} - \sin \tilde{\theta})] \,\psi  \\
 		- ( d \sin \tilde{\gamma} + \cos \tilde{\gamma} ) \,\psi^{\sigma +1} ,
 	\end{multline*}
 	
 	\begin{multline*}
 		(1 + d^2) \frac{(\psi')^2} {\psi}  = \biggl[ \omega\biggl(\frac{\sin \tilde{\theta}} {d} - \cos \tilde{\theta}\biggl) \psi
 		+ k  \biggl(\frac{\cos \tilde{\theta}} {d} + \sin \tilde{\theta}\biggl) \psi \\
 		- \biggl( \frac{\sin \tilde{\gamma}} {d} - \cos \tilde{\gamma} \biggl) \psi^{\sigma +1} \biggl] .
 	\end{multline*}
 	Hence, writing
 	\begin{equation}\label{alfa}
 	\epsilon = \frac { \omega(d \sin \tilde{\theta} + \cos \tilde{\theta}) + k ( d \cos \tilde{\theta}  - \sin \tilde{\theta}) }
 	{ 1 + d^2}
 	\end{equation}
 	and
 	\begin{equation}\label{beta}
 	\eta = \frac { d \sin \tilde{\gamma} + \cos \tilde{\gamma} }
 	{ 1 + d^2}
 	\end{equation}
 	we require that
 	\begin{equation}\label{equ-final1}
 	\omega\biggl( \frac{\sin \tilde{\theta}} {d} - \cos \tilde{\theta}\biggl) + k \biggl( \frac{\cos \tilde{\theta}} {d} + \sin \tilde{\theta} \biggl) 
 	= \omega(d \sin \tilde{\theta} + \cos \tilde{\theta} )
 	+ k ( d \cos \tilde{\theta} - \sin \tilde{\theta})
 	\end{equation}
 	and
 	
 	\begin{equation}\label{equ-final2}
 	\frac{\sin \tilde{\gamma}} {d} - \cos \tilde{\gamma} = \frac{2}  {\sigma + 2}
 	( d \sin \tilde{\gamma} + \cos \tilde{\gamma} ) 
 	\end{equation}
 	From \eqref{equ-final1} we derive
 	\begin{equation}\label{valor-d}
 	d = \frac{ k \sin \tilde{\theta} - \omega\cos \tilde{\theta}  \pm \sqrt{\omega^2 + k^2}}
 	{\omega\sin \tilde{\theta} + k \cos \tilde{\theta}} =: d_{\pm}
 	\end{equation}
 	and so
 	$$
 	\epsilon=\pm\sqrt{\omega^2+k^2}.
 	$$
 	However, if $d = d_-$ then $\psi$ would be a bound state of a nonlinear Schr\"odinger equation
 	with negative frequency, $\epsilon$, which does not exists (see \cite{Caz}). Thus, we must have $d = d_+$. Finally, the definition  of $\gamma$ (cf. \eqref{eq:restricaogamma}) is equivalent to \eqref{equ-final2} and $\eta>0$. 
 	\end{proof}
 
 Finally, we turn our attention to the proof of Theorem \ref{thm:estabilidadebs}.
% We look for solutions of \eqref{GLcomplex} of the form $ u(t, x) = e^{i \omega t} \phi (x)$, and so
% \begin{equation}\label{equ-est}
% i \omega \phi = e^{i \theta} \Delta \phi + e^{i \gamma} | \phi |^\sigma \phi + k \phi ,
% \end{equation}
% with $ \pi/2 < \theta , \gamma < \pi/2,\, k\in \R , \sigma > 0$. In the following, we consider the 
% one dimensional case and, for simplicity, we take $\omega = 1$. We start to notice that the 
% solution $\phi \in H^1 (\R)$ of the elliptic equation \eqref{equ-est} is an equilibrium point of the equation
% \begin{equation}\label{equ-equilib}
% u_t = e^{i \theta} u'' + e^{i \gamma} | u |^\sigma u  - i u + k u .
% \end{equation}
We start to notice that 
a bound-state $\phi \in H^1 (\R)$ of \eqref{GLcomplex} is an equilibrium point of the equation
 \begin{equation}\label{equ-equilib}
 u_t = e^{i \theta} u'' + e^{i \gamma} | u |^\sigma u  - i\omega u + k u .
 \end{equation}
 Linearizing around $\phi$, we obtain
  \begin{equation}\label{equ-linear}
 u_t = e^{i \theta} u'' + e^{i \gamma} | \phi |^\sigma u  + 
 \sigma e^{i \gamma} | \phi |^{\sigma -2} \phi \,\Re \,(\overline{\phi} \,u) - i\omega u + k u \equiv - L u .
 \end{equation}
 If the spectrum $ \sigma (L)$  is in $\{ \Re \, \lambda > \eta \}$ for some $\eta > 0$, it is
 well known that the equilibrium point $\phi$ of \eqref{equ-linear} is asymptotically stable
 and thus we need some information about the spectrum of the operator $L$.
 
 \begin{defi} Let $L$ be a linear operator in a Banach space. We call a {\it normal point} any element of
 the resolvent set or an isolated eigenvalue of $L$ of finite multiplicity. The set of the normal points
 of $L$ is represented by $\tilde{\rho} (L)$. We define the essencial spectrum of $L$ as the
 set $\sigma_e  (L) := \C\, \backslash\, \tilde{\rho} (L)$.
 \end{defi}
 
 Consider now the operator $L$ given in \eqref{equ-linear}, $ L = M + N$ with
 $$ M u = - e^{i \theta} u'' + i\omega u - k u, \quad D(M) = H^2(\R) $$
 and
 $$ N u = - e^{i \gamma} | \phi |^\sigma u  - 
 \sigma e^{i \gamma} | \phi |^{\sigma -2} \phi \,\Re \,(\overline{\phi}\, u) $$
 Notice that $N$ is a linear bounded operator on $L^2 (\R)$ (since $\phi \in H^1(\R) \subset L^\infty (\R)$).
 The next lemma determines, in a way, the location of the essential spectrum, $\sigma_e (L)$.
 \begin{lem}\label{lemkrein}
 $1.$ The operator $N (\lambda_0 - M)^{-1}$ is compact for some $\lambda_0 \in \R$.
 \medskip
 
\noindent  $2.$ Let $ k < 0$ and consider the half-plane $ \Sigma = \{ \Re \,\lambda < - k \} \subset \rho (M)$.
 Then, either $\Sigma$ consists of normal points of $M + N$ or consists entirely of eigenvalues
 of $M + N$, and so $\sigma_e (L) \subset \{ \Re \lambda \geq - k \} $.
 \end{lem}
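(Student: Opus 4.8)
The statement is a classical perturbation-of-the-essential-spectrum result in the spirit of Weyl's theorem, so the natural tool is the invariance of the essential spectrum under relatively compact perturbations, together with a connectedness argument on the resolvent set to upgrade from ``normal points'' to an all-or-nothing dichotomy. Concretely, the plan is: first establish part~1, that $N(\lambda_0-M)^{-1}$ is compact for a suitable real $\lambda_0$; then use part~1 to deduce that $\sigma_e(M+N)=\sigma_e(M)$; then compute $\sigma_e(M)=\sigma(M)$ explicitly via the Fourier transform, obtaining a parabola-type curve whose complement contains the half-plane $\Sigma=\{\re\lambda<-k\}$; and finally run the connectedness dichotomy on $\Sigma$.

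\textbf{Part 1: compactness of $N(\lambda_0-M)^{-1}$.} Since $M=-e^{i\theta}\partial_{xx}+(i\omega-k)I$ with $D(M)=H^2(\R)$, for $\lambda_0\in\R$ with $\lambda_0$ outside $\sigma(M)$ the operator $(\lambda_0-M)^{-1}$ maps $L^2(\R)$ boundedly onto $H^2(\R)$. The operator $N$ is multiplication by functions built from $|\phi|^\sigma$ and $|\phi|^{\sigma-2}\phi\,\bar\phi$; since $\phi$ is a bound-state, $\phi\in H^1(\R)\hookrightarrow L^\infty(\R)$ and, more importantly, $\phi(x)\to 0$ as $|x|\to\infty$ (indeed exponentially, since $\phi$ lies on the stable/unstable manifold of the equilibrium $0$ of the ODE system in the proof of Theorem~\ref{thm:existbs}). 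Thus $N$ maps $H^2(\R)$ into $L^2(\R)$, and the composition $N(\lambda_0-M)^{-1}$ factors as $L^2\to H^2\to L^2$ where the second arrow is multiplication by a bounded function vanishing at infinity. Compactness then follows from the standard fact that, for a multiplier $m\in C_0(\R)$, the operator $u\mapsto m\,u$ is compact from $H^2(\R)$ (or even $H^1(\R)$) to $L^2(\R)$ — one combines Rellich's local compact embedding with the uniform smallness of $m$ outside a large ball via a cut-off/diagonal argument. This gives part~1 with any real $\lambda_0$ not in the spectrum of $M$.

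\textbf{Part 2: the dichotomy on $\Sigma$.} First I would compute $\sigma(M)$: conjugating by the Fourier transform, $M$ becomes multiplication by $e^{i\theta}\xi^2+i\omega-k$, $\xi\in\R$, so $\sigma(M)=\{e^{i\theta}t+i\omega-k : t\ge 0\}$, a half-line emanating from $i\omega-k$ into the open right half-plane (here $|\theta|<\pi/2$ is used). In particular $\re z\ge -k$ for every $z\in\sigma(M)$, so indeed $\Sigma=\{\re\lambda<-k\}\subset\rho(M)$, and moreover $\Sigma$ is a connected open set. Since $N(\lambda_0-M)^{-1}$ is compact, $N$ is $M$-compact (relatively compact with respect to $M$), and the classical theorem on the stability of the essential spectrum (in the ``$\tilde\rho$'' / normal-points formulation, e.g.\ as in Kato or Hislop--Sigal) applies: on any connected component of $\tilde\rho(M)\supset\Sigma$, either every point is a normal point of $M+N$ or every point is an eigenvalue of $M+N$ of infinite multiplicity — equivalently, $\sigma_e(M+N)$ intersected with a component of $\tilde\rho(M)$ is either empty or the whole component. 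Applying this to the connected set $\Sigma$ yields exactly the stated alternative, and in either case no point of $\Sigma$ lies in $\sigma_e(L)=\sigma_e(M+N)$, so $\sigma_e(L)\subset\{\re\lambda\ge -k\}$.

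\textbf{Main obstacle.} The one genuinely non-formal point is the decay of $\phi$ at infinity needed to make the multiplication operator land in $C_0(\R)$ and hence be compact: $\phi\in H^1(\R)$ alone gives $\phi\to 0$ along a sequence but not in $L^\infty$-norm at infinity. This is resolved by recalling from the construction in Theorem~\ref{thm:existbs} that $\phi=\psi\exp(id\ln\psi)$ with $\psi$ the Schr\"odinger bound-state, which decays exponentially; hence $|\phi|=\psi$ decays exponentially and $|\phi|^\sigma,\ |\phi|^{\sigma-1}|\phi|$ all lie in $C_0(\R)$ (for $\sigma>0$; for $\sigma<2$ one checks the term $|\phi|^{\sigma-2}\phi\bar\phi=|\phi|^\sigma$ directly, avoiding any spurious singularity). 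Everything else — $M$-compactness, the spectral computation for $M$, and the connectedness dichotomy — is standard operator theory once this decay is in hand.
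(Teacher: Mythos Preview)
Your proposal is correct and follows essentially the same route as the paper: approximate $N$ by compactly supported cutoffs to get compactness of $N(\lambda_0-M)^{-1}$, then invoke the Gohberg--Krein/Weyl-type perturbation theorem on the connected set $\Sigma\subset\rho(M)$ for the dichotomy. One small correction: the ``main obstacle'' you flag is not really there, since in one dimension $H^1(\R)\hookrightarrow C_0(\R)$ already gives $\phi(x)\to 0$ uniformly as $|x|\to\infty$, so the explicit exponential decay of $\psi$ is convenient but not strictly necessary.
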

 
 \noindent 
 \begin{proof} 1. Take a sequence $ \chi_j \in C_0^\infty (\R)$ such that $$\chi_j (x) = 1,\, x\in [-j, j],\,\,
\Supp \chi_j \subset [ -(j+1), j+1]$$ and consider the operators
$$ N_j u = - e^{i \gamma} | \phi_j |^\sigma u -  \sigma e^{i \gamma} | \phi_j |^{\sigma -2} \phi \,\Re \,(\overline{\phi_j}\, u) $$
where $\phi_j = \chi_j \phi$. For $\lambda_0 \in \Sigma,\, (\lambda_0 - M)^{-1} : L^2(\R) \rightarrow H^2(\R)$
and we see that, for any $j\in \N,\, N_j(\lambda_0 - M)^{-1}$ is a compact operator. On the other hand
$$ N_j (\lambda_0 - M)^{-1} \rightarrow N (\lambda_0 - M)^{-1}\,\, (j \rightarrow \infty) $$
in the operator norm of
 ${\mathcal L}\, (L^2 (\R))$. Since the space of compact operators is closed in the ${\mathcal L}\, (L^2 (\R))$
 space, it follows that  $N (\lambda_0 - M)^{-1}$ is compact.
 
 \noindent 2. We can now use \cite[Lemma 5.2]{krein} to conclude
 that the connected set $\Sigma \subset \rho (M)$ consists entirely of normal points of $L = M + N$
 or entirely of eigenvalues of $L$.
 \end{proof}
 
 \begin{proof}[Proof of the Theorem \ref{thm:estabilidadebs}] 
% 	First of all, when $k\to -\infty$, 
% 	$$ d \to \frac{ \sin \tilde{\theta} - 1} {\cos \tilde{\theta}}\in (-1,0)$$
% 	and the condition $|d|<\sqrt{1+\sigma/2}$ is satisfied. Moreover, it follows from \eqref{eq:condexistgamma} that $\gamma$ is well-defined. Therefore, the hypothesis of Theorem \ref{thm:existbs} are verified, ensuring the existence of a
% 	
% 	
 	Let $\phi=\psi\exp(id\ln\psi)$ be the bound-state built in Theorem \ref{thm:existbs}. Then $\psi$ satisfies
 	$$
 	\psi''=\epsilon \psi -\eta \psi^{\sigma+1},
 	$$
 	where $\epsilon$ and $\eta$ are given by \eqref{alfa}, \eqref{beta}. As a consequence (see \cite{Caz}, pag. 260),
 	$$
 	\|\phi\|_{L^\infty(\mathbb{R})}=  \|\psi\|_{L^\infty(\mathbb{R})}=|\psi(0)|=\left[ \epsilon \left( \frac{\sigma + 2} {2} \right) \right]^{\sigma/\epsilon}\eta^\sigma.
 	$$

  We have already seen (Lemma \ref{lemkrein}) that the essential spectum $\sigma_e (L)$
  lies in $ \{\Re \,\lambda > - k \}$. We need only to determine the location of the eigenvalues
  of $L = M + N$. Since $\| N u \|_{L^2(\R)} \leq (1 + \sigma) \| \phi \|_{L^\infty (\R)}^\sigma 
  \| u \|_{L^2(\R)}$, with $\phi$
  an $H^1$ solution of \eqref{equ-est2}, any eigenvalue $\lambda$ of $L$ verifies
  $$ \Re\, \lambda \geq - k - (1 + \sigma) \| \phi \|_{L^\infty (\R)}^\sigma $$
  and, therefore, one has the asymptotic stability of the bound-state $ e^{i t}\phi$ if
  \begin{equation}\label{eq:condestabilidade}
  (1 + \sigma) \| \phi \|_{L^\infty (\R)}^\sigma < - k.
  \end{equation}%  We recall that $\phi=\psi\exp(id\ln\psi)$, where $d$ is defined by \eqref{eq:valor-d},
%  $$
%  \psi''=\epsilon \psi -\eta \psi^{\sigma+1}
%  $$
%  and $\epsilon$ and $\eta$ given by \eqref{alfa}, \eqref{beta}. As a consequence (see \cite{Caz}, pag. 260),
%  $$
%  \|\phi\|_{L^\infty(\mathbb{R})}=  \|\psi\|_{L^\infty(\mathbb{R})}=|\psi(0)|=\left[ \epsilon \left( \frac{\sigma + 2} {2} \right) \right]^{\sigma/\epsilon}
%  $$
  Noticing that, when $k\to-\infty$,
  $$ \epsilon \sim -\frac{ k (1 + \cos\theta )} {2}  > 0\quad \mbox{ and }\quad  \eta\mbox{ bounded,} $$
  condition \eqref{eq:condestabilidade} is verified for sufficiently large $-k$.
  \end{proof}
  
  \section{Acknowledgements}
  The authors were partially supported by Funda\c{c}\~ao para a Ci\^encia e Tecnologia, through the grant UID/MAT/04561/2013.

 \bigskip
 \bigskip
 
 \normalsize
 
 \begin{center}
 	{\scshape Sim\~ao Correia}\\
 	{\footnotesize
 		CMAF-CIO, Universidade de Lisboa\\
 		Edif\'icio C6, Campo Grande\\
 		1749-016 Lisboa, Portugal\\
 		\email{sfcorreia@fc.ul.pt}
 	}
  \bigskip
 
 	 	{\scshape M\'ario Figueira}\\
 	{\footnotesize
 		CMAF-CIO, Universidade de Lisboa\\
Edif\'icio C6, Campo Grande\\
1749-016 Lisboa, Portugal\\
\email{msfigueira@fc.ul.pt}
 	}

\end{center}

\end{document}